\documentclass[11pt]{amsart}
\usepackage{geometry}                % See geometry to learn the layout options. There are lots.
\geometry{letterpaper}                   % ... or a4paper or a5paper or ... 
\usepackage{graphicx}
\usepackage{epstopdf}
\usepackage[usenames]{color}

\DeclareGraphicsRule{.tif}{png}{.png}{`convert #1 `dirname #1`/`basename #1 .tif`.png}

\usepackage{amssymb, amsthm, comment, color}
\usepackage[labelfont=small]{caption, subcaption}

\newtheorem{assumption}{Assumption}
\newtheorem{thm}{Theorem}

\newtheorem{lemma}{Lemma}

\theoremstyle{remark}
\newtheorem*{rem}{Remark}

\title{Knotted and linked products of recombination on $T(2,n)\#T(2,m)$ substrates}
\author{Erica Flapan}
\address{Department of Mathematics, Pomona College, Claremont, CA 91711, USA}

\author{Jeremy Grevet, Qi Li, Chen Sun, Helen Wong}
\address{Department of Mathematics, Carleton College, Northfield, MN 55057, USA}

 \subjclass{ 57M25, 92C40, 92E10}
 \keywords{DNA topology, DNA knots, site-specific recombination}

\thanks{The first author was supported in part by NSF Grant DMS-0905087, the last author was supported in part by NSF Grant DMS-1105692. }
\begin{document}
\maketitle

\begin{abstract}  We develop a topological model of site-specific recombination that applies to substrates which are the connected sum of two torus links of the form $T(2,n)\#T(2,m)$.  Then we use our model to prove that all knots and links that can be produced by site-specific recombination on such substrates are contained in one of two families, which we illustrate.  
\end{abstract}

\section{Introduction}

Knots and links can occur in the axis of DNA molecules as the result of replication, recombination, and enzyme actions.  Such knots and links have been used to better understand the mechanism of how topoisomerase enzymes change the topology of DNA during site-specific recombination.  The difficulty in understanding this mechanism is that there is no way to observe the details of an enzyme action while it's taking place, and hence the mechanism cannot be studied directly.  
To solve this problem, molecular biologists allow a topoisomerase enzyme to act on a closed circular DNA {\it substrate}.    They then use electron microscopy or gel electrophoresis to try to identify the knotted and linked {\it products} of recombination mediated by the enzyme, and finally use topology to try to deduce the mechanism of the enzyme that could have produced such products from the known knot or link type of the substrate.  However, neither identifying the knots and links nor using topology to explain the mechanism is easy.  

Major progress on the topological aspect of this problem was made when Ernst and Sumners \cite{ErnstSumners} developed the {\it tangle model of recombination} to explain how the enzyme Tn3 resolvase acted processively on an unknotted substrate to sequentially produce a Hopf link, then a figure eight knot, then a Whitehead link, and finally a $6_2$ knot.  Since then, the tangle model has been further developed and used to explain the actions of a number of other enzymes (see for example \cite{BuckMauricio, BuckMarcotteTn3,  Cabrera, SumnersCozzarelliPhageLambda,   DarcyEtAlXer, DarcyLueckeVazquez,ErnstSumners99, Kauffman, Kim4string, Sumners, VazquezSumnersGin}).  For enzymes whose knotted and linked products have been identified, this approach has been quite successful at explaining the mechanism of the enzyme action.  

However, identifying the knotted and linked products of a enzyme action has its own challenges.  In order to visualize the  products using electron microscopy, molecular biologists need to have a large quantity of knotted and linked products, which can be quite difficult to obtain.  The products are then coated with a substance known as RecA to make them visible with an electron microscope. But even so, it is often hard to see which strand goes over and which strand goes under at each crossing.  If even one crossing is misclassified, the knot or link may be incorrectly identified.  Using gel electrophoresis rather than electron microscopy has the advantage that it does not require a large number of knotted or linked products.  However, gel electrophoresis does not enable scientists to visualize the products.  Rather, it separates the knotted and linked  molecules according to their compactness, which is then correlated with their minimal crossing numbers.  Unfortunately, in general, gel electrophoresis cannot completely identify the particular knots or links from among all those in the tables with a given minimal crossing number.

In contrast with the tangle model which was developed to explain the mechanism of particular enzyme actions, Buck and Flapan \cite{BuckFlapan} developed a model to make it easier for molecular biologists to identify the knotted or linked products of recombination.  In particular, they showed that all knots and links that could be produced by site-specifc recombination with substrates that were (possibly trivial) $T(2,n)$ torus links were in a single family (illustrated in Figure~\ref{figure:BFknots}).  Knowing that only this family of knots and links could be produced from such substrates, helps molecular biologists to identify products of site specific recombination on these substrates based solely on the information they obtain from gel electrophoresis.  The approach introduced in \cite{BuckFlapan} was extended to apply to substrates that are twist knots in \cite{BuckValencia}.

\begin{figure}[htpb]
\includegraphics[width=1.2in]{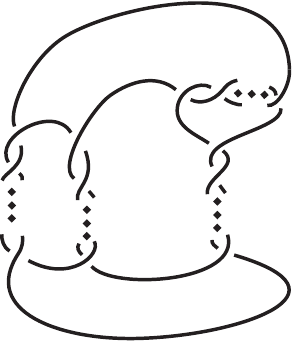}
\caption{The family of products predicted for $T(2,n)$ substrates.} \label{figure:BFknots}
\end{figure}

 In the current paper, we seek to clarify the model developed in \cite{BuckFlapan,BuckValencia}, and extend the results further to apply to substrates that are non-trivial connected sums of torus links of the form $T(2,n)$ and $T(2,m)$ (see Figure~\ref{figure:substrate}).

 \begin{figure}[htpb] 
 \includegraphics[]{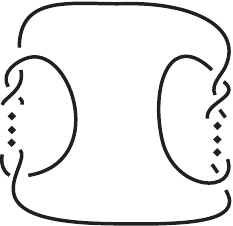}
\caption{We consider substrates of the form $T(2,m) \# T(2,n)$}
 \label{figure:substrate}
\end{figure}

To date, all known knotted and linked products of site-specific recombination on closed circular DNA substrates mediated by serine or tyrosine recombinases are torus links of the form $T(2,n)$, twist knots, and connected sums of the form $T(2,n)\#T(2,m)$ (see \cite{BuckFlapan} for a list of such products).  Such knotted and linked products can be used by molecular biologists as substrates for future experiments.  By characterizing all knots and links that can be produced by site-specific recombination on these substrates, we make the task of identifying the future products of such experiments easier for molecular biologists.

\section{The assumptions of our model}

 Our model relies on three basic assumptions, which taken together correspond to the three assumptions in \cite{BuckFlapan,BuckValencia}.  However, we have tried to state our assumptions in a more explicit and self-contained way than in \cite{BuckFlapan,BuckValencia}, and hence our wording is somewhat different.  The biological justifications for the three assumptions are given in \cite{BuckFlapanJMB, BuckValenciaJMB}.  

The first two assumptions concern how the topoisomerase enzyme and the substrate sit together in space.  In particular, the first assumption is about the configuration of the substrate near the enzyme, and the second assumption is about the configuration of the substrate away from the enzyme.  The third assumption describes the possible ways that a serine or tyrosine enzyme can alter the strands of the DNA.

\subsection{Configuration of the substrate near the enzyme}

\begin{assumption} \label{assumption:1}  The substrate-enzyme complex can be modeled by a ball $B$ intersecting the substrate $J$ in two short arcs.  Furthermore, $B$ has a properly embedded disk $D$ containing the four points of $J\cap \partial B$ such that some projection of $J \cap B$ onto $D$ has at most one crossing between the two arcs of $J$ and no crossings within a single arc of $J$. \end{assumption}

We now assume that Assumption \ref{assumption:1} holds for our substrate-enzyme complex, and hence the projection of $J \cap B$ onto the disk $D$ has one of the forms illustrated in Figure \ref{figure:assumption1}.   We now fix the ball $B$ and the disk $D$ so that we can refer to these sets throughout the remainder of the paper.  Furthermore, we shall refer to $B\cup J$ as the {\it substrate-enzyme complex} and to $D$ as the {\it projection disk}.

\begin{figure}[htpb] 
 \includegraphics[]{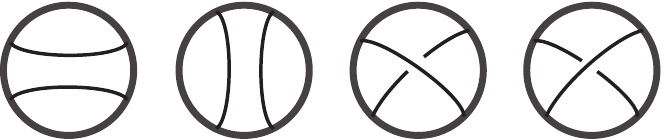}
\caption{Possible projections of $J \cap B$ onto $D$.} \label{figure:assumption1}
\end{figure}

\subsection{Configuration of the substrate away from the enzyme}
Before we can state our second assumption, we need to introduce some terminology.  Consider a surface with boundary $S$ embedded in a sphere $P\subseteq \mathbb{R}^3$.  By a slight abuse of language we will refer to the surface $S$ as a {\it planar} surface.  

Now suppose that the planar surface $S$ is decorated with finitely many arcs $\alpha_i$ whose boundaries are in $ \partial S$.  For each $i$, let $N(\alpha_i)$ denote a tubular neighborhood of $\alpha_i$ in $\mathbb{R}^3$ such that $N(\alpha_i)$ meets $S$ in a neighborhood of $\alpha_i$.  We say that the decorated surface $S$  {\it represents} a surface $R$ in $\mathbb{R}^3$, if $R$ can be obtained from $S$ by replacing each $N(\alpha_i) \cap S$ by a half-twisted band contained in $N(\alpha_i)$.   We may further annotate the decorating arcs with $+$ or $-$ signs to distinguish between the two direction of the twisting.  However, we will generally suppress such designations for the sake of simplicity.  Note that by suppressing this information, a given decorated surface $S$ can represent more than one surface $R$. Figure~\ref{decorated} illustrates an example of a decorated planar surface $S$ representing a surface $R$ in $\mathbb{R}^3$.

\begin{figure}[htpb]
\includegraphics[width=.6\textwidth]{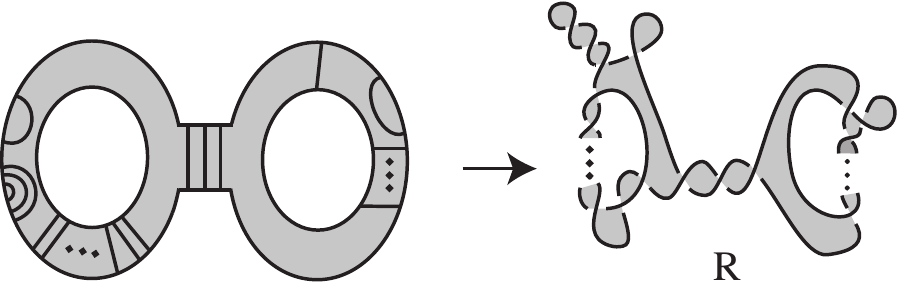}
\caption{The decorated planar $S$ represents the surface $R$.}
\label{decorated}
\end{figure}

In Assumption~2, we will refer to the number of {\it groups of parallel arcs} on a decorated planar surface $S$.  For example, the decorated surface in Figure~\ref{decorated} has seven groups of parallel arcs.

\begin{assumption} \label{assumption:2}  Let $J$ be a connected sum of non-trivial torus links $T(2,n)$ and $T(2,m)$, and suppose that $J\cup B$ is a substrate-enzyme complex with projection disk $D$ satisfying Assumption~1.  Then, after an ambient isotopy of $J$ pointwise fixing the ball $ B$, there is a surface $R$ with boundary $J$ satisfying the following conditions:
\begin{enumerate}

\item $R$ is represented by a decorated 2-holed disk $S$ contained in a sphere $P$ such that $P\cap B=D$;

\item $ \partial B$ is disjoint from the neighborhoods $N(\alpha_i)$ of the decorating arcs of $S$;   

\item $S$ has a minimum number of decorating arcs and a minimum number of groups of parallel arcs among all decorated planar surfaces $S$ satisfying the above conditions.
\end{enumerate}
\end{assumption}

Observe that since $J=T(2,n)\#T(2,m)$, even without Assumption~2 we would know that $J$ bounds a Seifert surface which is represented by a decorated 2-holed disk contained in a sphere.  However, conditions (1) and (2) of Assumption 2 are making the stronger statement that the surface $R$ can be chosen to have a nice relationship to the substrate-enzyme complex $B\cup J$.  In particular, these conditions prevent $B\cup R$ from having one of the forms illustrated in Figure~\ref{figure:assumption2disallowed}.  Biological arguments are given in \cite{BuckFlapanJMB, BuckValenciaJMB} explaining why these types of configurations are unlikely to occur.

\begin{figure}[htpb]
\includegraphics{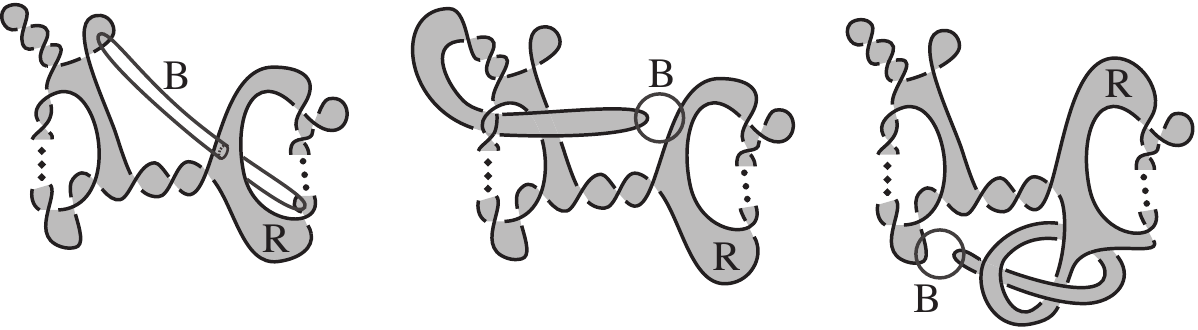}
\caption{Configurations of $R \cup B$ that are disallowed by Assumption 2} \label{figure:assumption2disallowed}
\end{figure}

Condition (3) of Assumption~2 is included so that the decorating arcs on the surface $S$ are as simple as possible.   In particular, condition (3) does not add any additional restrictions on the topology of the substrate-enzyme complex $J\cup B$.

We now assume that Assumption~2 holds for our substrate-enzyme complex, and fix the surface $R$ and sphere $P$, as well as the representation of $R$ by the decorated planar surface $S\subseteq P$ satisfying the conditions in Assumption~2.

\subsection{Possible ways the enzyme can alter the strands}

To further simplify our drawings, we will depict a row of $k$ half-twists between two strands of a link as a rectangle where crossings go between the two short sides of the rectangle, as in Figure \ref{figure:rectangle}.  Positive and negative values of $k$ correspond to positive and negative half-twists between the two strands respectively.  In situations where $k$ can be any integer, we generally omit it from our pictures.

\begin{figure}[htpb]
\includegraphics[height=.8in]{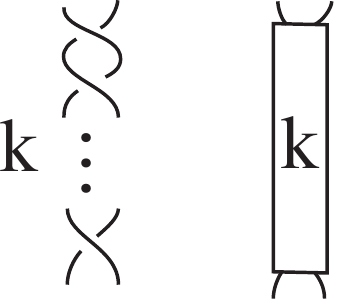}
\caption{A row of half-twists is depicted as a rectangle.} \label{figure:rectangle}
\end{figure}

Our third assumption concerns the serine and tyrosine families of recombinases.  To describe their action on the substrate, 
we use the projection of $J \cap B$ onto the disk $D \subseteq B$ which was fixed in Assumption~1.

\begin{assumption} \label{assumption:3}
The recombinase only alters the substrate $J$ within the ball $B$, changing the topological configuration of the two arcs in $J \cap B$ in the following ways.
\begin{enumerate}
\item A serine recombinase cuts the arcs of $J \cap B$, adds a crossing to the projection of $J\cap B$ onto $D$, and then reseals the arcs.  Each subsequent round of processive recombination cuts the same arcs, adds an identical crossing between the arcs, and reseals the arcs in exactly the same manner as the first round.  In particular, after  $|k|$ rounds of processive recombination the arcs of $J \cap B$ are replaced by a row of $k$ identical half-twists as illustrated in Figure \ref{figure:assumption3serine} with $n=k+1$ if $k>0$ and $n=k-1$ if $k<0$.

\item A tyrosine recombinase replaces the arcs of $J \cap B$ with arcs which have a projection onto $D$ with at most $2$ crossings, as illustrated in Figure \ref{figure:assumption3tyrosine} with $|k|\leq 2$. 
\end{enumerate}
\end{assumption}

\begin{figure}[htpb]
\includegraphics[height=1.2in]{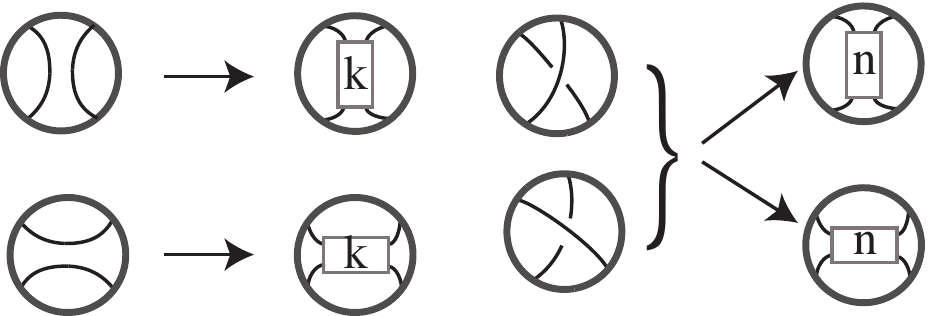}
\caption{The effect on $J\cap B$ of $|k|$ rounds of processive recombination mediated by a serine recombinase.}  \label{figure:assumption3serine}
\end{figure}

\begin{figure}[htpb] 
\includegraphics[height=2.3in]{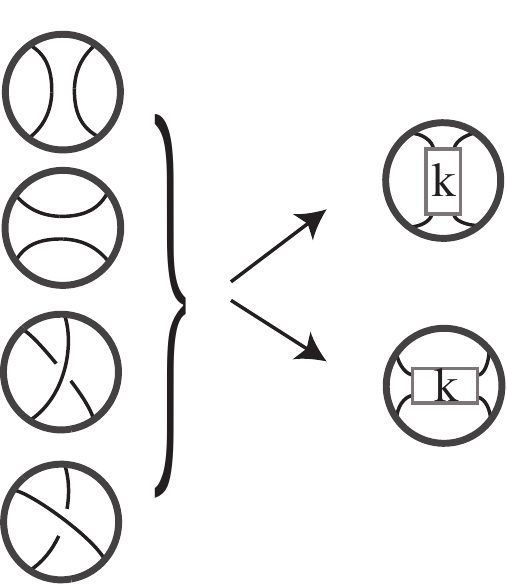}
\caption{The effect on $J\cap B$ of recombination mediated by a tyrosine recombinase. } \label{figure:assumption3tyrosine}
\end{figure}

\section{Lemmas derived from our assumptions} \label{section:thm}

For the remainder of the paper we will assume that our substrate and enzyme satisfy Assumptions 1, 2, and 3.  To characterize the possible knotted and linked products of recombination, we first use Assumptions 1 and 2 to reduce our analysis to a small number of possible configurations of the substrate-enzyme complex prior to recombination.  Then we apply Assumption 3 to alter the strands within the enzyme ball and identify the resulting products.   

More specifically, it follows from Assumption~1 that there is a disk $D$ properly embedded in the enzyme ball $B$ such that a projection of $J\cap B$ onto $D$ has one of the forms illustrated in Figure~\ref{figure:assumption1}.  Then it follows from Assumption~2 that after an ambient isotopy pointwise fixing $B$, the substrate $J$ bounds a surface $R$ represented by a decorated surface $S$ contained in a sphere $P$ satisfying the three conditions of Assumption~2.  Thus prior to recombination, the substrate-enzyme complex can be understood from the planar diagram $S\cup D\subseteq P$.  Finally, by Assumption 3, the action of the enzyme on the substrate $J$ can be seen as a change in the projection of the arcs of $J\cap B$ onto $D$ as illustrated in Figures~\ref{figure:assumption3serine} and \ref{figure:assumption3tyrosine}. Together these three assumptions allow us to characterize the knotted and linked products by focusing on the planar diagram $S\cup D\subseteq P$ prior to recombination, rather than having to work with the configuration of the substrate-enzyme complex $J\cup B$ in $\mathbb{R}^3$.

Before stating and proving our characterization theorem, we use the three assumptions to prove a number of lemmas that will simplify our analysis.

\bigskip

\begin{lemma}\label{lemma:forms}
The possible forms of $S \cap D$ are illustrated in Figure~\ref{Lemma1}.   Furthermore, if $D$ intersects more than one component of $\partial S$, then $S\cap D$ only has one component.  
\end{lemma}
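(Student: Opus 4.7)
The plan is to view both $S$ and $D$ as sub-surfaces of the sphere $P$ and to classify the planar region $S\cap D$ combinatorially, using the minimality in Assumption 2(3) to eliminate superfluous configurations. Since $S$ is a 2-holed disk, the complement $P\setminus\mathrm{int}(S)$ consists of three disjoint open disks $E_0,E_1,E_2$, one per component of $\partial S$. The projection disk $D$ is a sub-disk of $P$ with $\partial D=\partial B\cap P$. By Assumption 2(2), $\partial B$ is disjoint from each $N(\alpha_i)$, hence so is $\partial D$; consequently in a collar of $\partial D$ in $P$ no half-twist modification occurs, and there the curves $\partial S$ and $J$ agree, so the four points of $J\cap\partial B$ all lie in $\partial D\cap\partial S$.

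The first step is to show $|\partial D\cap\partial S|=4$. Since $\partial D$ bounds the disk $D$ in the 2-sphere $P$, its algebraic intersection number with any simple closed curve on $P$ is zero, so each component of $\partial S$ meets $\partial D$ in an even number of points. An innermost-arc argument inside $D$ then removes any excess: an innermost sub-disk $D'\subset D$ bounded by an arc of some component of $\partial S$ and an arc of $\partial D$, with $\mathrm{int}(D')$ disjoint from $\partial S$, can be used to isotope $S$ across $D'$. Because such a $D'$ can be chosen disjoint from the $N(\alpha_i)$, the result is a decorated planar surface that still represents $R$ and still satisfies conditions (1) and (2) of Assumption 2, but with no more decorating arcs or groups of parallel arcs and with strictly fewer intersections of $\partial D$ with $\partial S$. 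This contradicts condition (3), so $|\partial D\cap\partial S|=4$, and the distribution of these four points among the three components of $\partial S$ is, up to relabeling, $(4,0,0)$ or $(2,2,0)$.

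The second step enumerates the combinatorial possibilities. The four points of $\partial D\cap\partial S$ divide $\partial D$ into four sub-arcs lying alternately in $S$ and in the complementary disks $E_i$. The connected components of $\partial S\cap D$ consist of one arc between each pair of points on the same component of $\partial S$, together with possibly some entire components of $\partial S$ wholly contained in $D$; minimality rules out configurations in which such a closed loop of $\partial S$ in $D$ could be removed by isotopy. A case analysis on the distribution $(4,0,0)$ or $(2,2,0)$, the cyclic order of the four points, and the presence of whole components of $\partial S$ inside $D$ then reproduces the pictures in Figure~\ref{Lemma1}. For the final assertion, suppose $D$ meets more than one component of $\partial S$. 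In the $(2,2,0)$ case with $p_1,p_2\in C_1$ and $q_1,q_2\in C_2$, disjointness of $C_1$ and $C_2$ as embedded arcs in $D$ forces the cyclic order of the four points on $\partial D$ to be non-interleaved, say $p_1,p_2,q_1,q_2$; the disjoint arcs $C_1\cap D$ and $C_2\cap D$ split $D$ into three regions, and only the middle region lies in $S$, so $S\cap D$ is connected. In the remaining sub-case of $(4,0,0)$ together with a whole component of $\partial S$ inside $D$, the whole component contributes an extra hole to $S\cap D$ but does not split it, and $S\cap D$ is again connected.

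The main obstacle will be executing the minimality arguments rigorously. At each place where an innermost sub-disk of $D$ is used to simplify $\partial S$, one must verify that the resulting simplification can be realized by an ambient isotopy of $J$ pointwise fixing $B$ together with a corresponding redecoration of the planar surface that strictly decreases or at worst preserves both the number of decorating arcs and the number of groups of parallel arcs. In particular, one must check that each such isotopy can be carried out without pushing a half-twisted band across $\partial B$, so that Assumption 2(2) remains valid throughout.
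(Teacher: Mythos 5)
Your overall strategy---reading off $S\cap D$ as a planar region from the pattern of $\partial S\cap \partial D$---is in the spirit of the paper's proof, but two steps lean on the minimality condition (3) of Assumption~\ref{assumption:2} in places where it does not apply, and both steps fail as written. Condition (3) minimizes only the number of decorating arcs and the number of groups of parallel arcs; it says nothing about $|\partial S\cap\partial D|$. Your innermost-disk isotopy strictly reduces $|\partial S\cap\partial D|$ while, by your own account, merely \emph{not increasing} the two quantities that condition (3) actually controls, so no contradiction with (3) is obtained. The count of four is in fact immediate and needs no minimality argument: by condition (2) of Assumption~\ref{assumption:2} the neighborhoods $N(\alpha_i)$ miss $\partial B$, so $\partial S$ coincides with $J=\partial R$ in a neighborhood of $\partial B$; hence $\partial S\cap\partial B=J\cap\partial B$ is exactly the four points supplied by Assumption~\ref{assumption:1}, and $\partial S\cap B$ consists of exactly two arcs (the arcs of $J\cap B$, possibly altered away from $\partial B$ by the single decorating arc permitted inside $B$). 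There are no excess intersections to remove; this direct identification is the route the paper takes.

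The second, more serious gap is your treatment of a whole component of $\partial S$ lying in the interior of $D$. You first claim minimality removes such loops ``when they can be removed by isotopy,'' and then in your final paragraph you retain ``a whole component of $\partial S$ inside $D$'' as a live sub-case, concluding that $S\cap D$ is connected but has an extra hole. That configuration is not among the forms in Figure~\ref{Lemma1}, so if it survived your case analysis the lemma as stated would be false. Such components must be excluded outright, and again the exclusion comes from Assumption~\ref{assumption:1} rather than minimality: since $\partial S\cap B$ consists of exactly the two arcs identified above, $\partial S$ has no closed component inside $B\supseteq D$ (such a circle, together with any half-twisted bands attached to it, would force $J\cap B$ to contain something other than two unknotted arcs with the prescribed simple projection). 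Once you know $\partial(S\cap D)$ consists of exactly two arcs of $\partial S$ and two arcs of $\partial D$, the strip/two-disks dichotomy and the ``furthermore'' statement do follow essentially as in your last paragraph.
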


\begin{figure}[h]
 \includegraphics[]{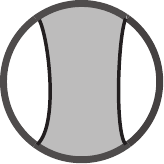}
\quad 
 \includegraphics[]{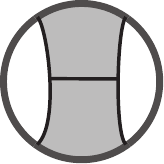}
 \quad 
  \includegraphics[]{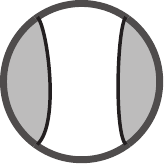}
\caption{Possible forms of $S \cap D$.} 
 \label{Lemma1}
\end{figure}

Note we will refer to $S\cap D$ in the first and second configurations as a {\it strip} and refer to $S\cap D$ in the third configuration as {\it two disks}.

\begin{proof}  By Assumption~1, $J\cap B$ consists of precisely two arcs whose four endpoints lie on the circle  $\partial D\subseteq \partial B$. Since $R$ is bounded by $J$, $\partial R\cap B$ consists of these same two arcs.  Also, it follows from Assumption~1 that $S$ has at most one decorating arc in $B$, and by condition (2) of Assumption~ 2 a neighborhood of any such arc must be disjoint from $\partial B$.  Thus regardless of whether $R\cap B$ is precisely $S\cap B$, or $R\cap B$ is obtained from $S\cap B$ by replacing one arc with a half-twisted band, $\partial S\cap B$ consists of two arcs with endpoints in $\partial B$.  

Now since $S$ and $\partial B$ are surfaces, $S\cap \partial B$ must contain two arcs between the endpoints of the arcs in $\partial S\cap B$.  Also, by condition (1) of Assumption~2, $S\subseteq P$ and $P\cap B=D$.  Hence $S\cap B\subseteq D$.  In addition, by Assumption~1, $D$ is properly embedded in $B$.  Hence $S\cap \partial B$ can have no circle components.  It follows that $S\cap \partial B$ consists solely of the two arcs joining the endpoints of  the two arcs of $\partial S\cap B$.  Thus the surface $S\cap B$ is bounded by two arcs in $\partial B$ and two arcs properly embedded in $B$.

Since, as we saw above, $S\cap B\subseteq D$, we must have $S\cap B\subseteq S\cap D$.  On the other hand, since $D\subseteq B$, we have $S\cap D\subseteq S\cap B$, and hence $S \cap  D = S \cap  B$.   Thus $S\cap D$ is bounded by two arcs in $\partial D$ and two arcs properly embedded in $D$.  If these four arcs together bound a single surface in $D$, then $S\cap D$ is a strip.  If the two pairs of arcs with shared endpoints each bound a surface in $D$, then $S\cap D$ is two disks.   Also, if there is a decorating arc in $S\cap D$, it is disjoint from $\partial D$.  It now follows that $S\cap D$ must have one of the forms illustrated Figure~\ref{Lemma1}.

Finally suppose that $D$ intersects more than one component of $\partial S$.  Since each component of $\partial S$ separates the sphere $P$, some component of $S \cap D$ intersects more than one component of $\partial S$.  Now it follows from Figure~\ref{Lemma1} that $S\cap D$ is a strip.\end{proof}

\medskip

\begin{lemma}  \label{lemma:nonugatory}
No decorating arc on $S - D$ separates $S \cup D$.  \end{lemma}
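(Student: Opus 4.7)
The plan is to argue by contradiction using the minimality condition~(3) of Assumption~\ref{assumption:2}. Suppose some decorating arc $\alpha$ on $S-D$ separates $S \cup D$. Since $\alpha \cap D = \emptyset$ and $D$ is connected, $D$ lies in exactly one component of $(S \cup D) - \alpha$; call that component $Y_1$ and the other $Y_2$, so that $Y_2 \subseteq S - D$. Let $A$ denote the half-twisted band in $\mathbb{R}^3$ that replaces $N(\alpha) \cap S$ in forming $R$ from $S$, and let $R_2 \subseteq R$ denote the sub-surface of $R$ built from $Y_2$ together with the half-twisted bands at decorating arcs contained in $Y_2$.

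The first key step is to observe that $R_2 \cup A$ is disjoint from $B$. From the proof of Lemma~\ref{lemma:forms}, $R \cap B = S \cap D \subseteq Y_1$, so $R_2 \cap B = \emptyset$. Also, since $S \cap B = S \cap D$, the arc $\alpha \subseteq S - D$ lies outside $B$; combined with condition~(2) of Assumption~\ref{assumption:2}, which forces $N(\alpha)$ to be disjoint from $\partial B$, this places $A \subseteq N(\alpha)$ entirely in $\mathbb{R}^3 - B$.

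The next step is to realize the ``untwisting'' of $A$ as an ambient isotopy $\phi$ of $\mathbb{R}^3$ that rotates $R_2$ by $180^\circ$ about the axis through the two endpoints of $\alpha$. Since $R_2 \cup A$ is disjoint from $B$, one can take $\phi$ to be supported in a small regular neighborhood of $R_2 \cup A$ missing $B$, so that $\phi$ pointwise fixes $B$. The rotation cancels the half-twist in $A$, so $\phi(R)$ is represented by the decorated planar surface $S' = \phi(S)$ in the sphere $P' = \phi(P)$ with the decoration at the image of $\alpha$ now removed. Because $\phi$ fixes $B$ pointwise, $P' \cap B = \phi(P \cap B) = D$ and condition~(2) still holds for the remaining decorating arcs; and $\phi(J) = \partial \phi(R)$ is ambient isotopic to $J$ fixing $B$. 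Hence $S'$ is a decorated 2-holed disk in a sphere satisfying conditions~(1) and~(2) of Assumption~\ref{assumption:2} but with strictly fewer decorating arcs than $S$, contradicting condition~(3).

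The main obstacle I anticipate is the careful construction of the ambient isotopy $\phi$: one must ensure that it is supported away from $B$, that it cancels exactly the half-twist $A$, and that it carries $(S,P)$ to a decorated surface in a sphere still satisfying the required conditions. The disjointness of $R_2 \cup A$ from $B$, established above, is precisely what allows a rotation-type isotopy to be performed in the complement of $B$, and it is the linchpin of the argument.
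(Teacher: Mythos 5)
Your proof is correct and follows essentially the same route as the paper: assume a separating decorating arc exists, note that the component not containing $D$ (together with its band) is disjoint from $B$, perform a flip/untwisting ambient isotopy fixing $B$ pointwise to remove that decoration, and contradict the minimality in condition~(3) of Assumption~2. The paper states this in three sentences; you have simply filled in the details of why the isotopy can be supported away from $B$.
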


Figure~\ref{Lemma3} illustrates examples of decorating arcs which are excluded by Lemma~\ref{lemma:nonugatory}.

\begin{figure}[h]
\includegraphics[width=2.5in]{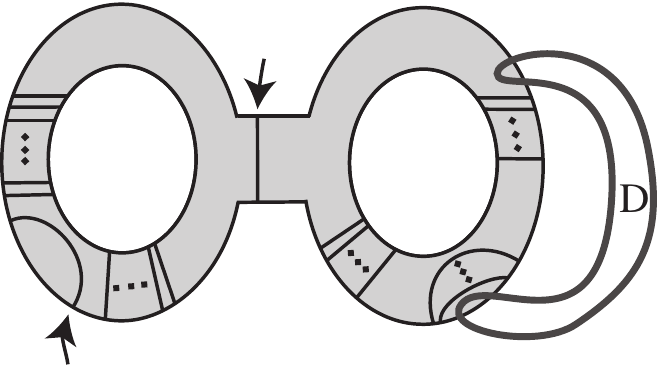}
\caption{Each of these decorating arcs separates $S \cup D$.} 
 \label{Lemma3}
\end{figure}

\begin{proof}  Suppose there exists a decorating arc $\alpha$ which separates $S\cup D$.  Then there is an ambient isotopy of $J$ pointwise fixing $B$ that removes the half-twist of $R$ represented by $\alpha$.  After this isotopy, $J$ bounds a surface that is represented by a decorated 2-holed disk with one less decorating arc which still satisfies the conditions of Assumption~2.  But this implies that the number of decorating arcs on our original surface $S$ was not minimal, contradicting condition~(3) of Assumption~2.\end{proof}
\medskip

\begin{lemma}\label{separatingpairs}  Any two decorating arcs on $S-D$ that together separate $S \cup D$ must belong to a single group of parallel arcs.\end{lemma}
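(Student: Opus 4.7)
The plan is to argue by contradiction using the minimality in condition~(3) of Assumption~2. Suppose $\alpha_1, \alpha_2$ are two decorating arcs on $S-D$ that together separate $S\cup D$ but that do not lie in a single group of parallel arcs. I would produce a new decorated 2-holed disk $S'$ still satisfying conditions~(1) and~(2) of Assumption~2 and representing a surface bounded by $J$, yet having strictly fewer decorating arcs or strictly fewer groups of parallel arcs than $S$, contradicting condition~(3).

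First I would pick the pair $(\alpha_1,\alpha_2)$ so that one of the two components of $(S\cup D)\setminus(\alpha_1\cup\alpha_2)$, call it $A$, is innermost---that is, contains no other decorating arc. (If not, replace the pair by a smaller separating pair inside $A$.) By Lemma~\ref{lemma:nonugatory}, neither $\alpha_i$ separates $S\cup D$ on its own, so both $\alpha_1$ and $\alpha_2$ appear on $\partial A$; the remainder of $\partial A$ lies in $\partial S\cup\partial D$. I would then split into cases according to what else $A$ contains.

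In the clean case, $A$ is disjoint from $D$ and contains no inner boundary circle of the 2-holed disk $S$. Then $A$ is a rectangle bounded by $\alpha_1$, $\alpha_2$, and two subarcs of $\partial S$, so the arcs are isotopic in $S$ rel $\partial S$. If the associated twist signs agree, then $\alpha_1, \alpha_2$ already lie in one group of parallel arcs, contradicting the hypothesis. If the signs disagree, the two half-twists of $R$ cancel along an isotopy of $J$ pointwise fixing $B$, producing a decorated $S'$ with two fewer decorating arcs that still satisfies conditions~(1) and~(2) of Assumption~2, contradicting~(3). In the remaining cases $A$ either meets $D$ or contains an inner boundary circle of $S$; in each subcase I would perform an isotopy of $R$ supported in a neighborhood of $A$---sliding one arc across $D$ in the former case, or pushing it around the inner boundary in the latter---so that $\alpha_1$ becomes parallel to $\alpha_2$, merging their groups and reducing the total number of groups of parallel arcs, again contradicting~(3).

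The main obstacle will be this last step: after such a slide one must check that the new decorated surface $S'$ still sits inside a sphere $P'$ meeting $B$ exactly in $D$, and that neighborhoods of the decorating arcs remain disjoint from $\partial B$ as required by condition~(2). Handling this carefully---especially when $A$ meets $D$ in one of the configurations classified in Lemma~\ref{lemma:forms} (strip or two disks)---will require a case-by-case verification that the simplifying isotopy can be chosen so as to preserve all the ambient hypotheses on the substrate--enzyme complex.
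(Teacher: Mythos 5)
Your overall strategy (contradict the minimality in condition~(3) of Assumption~2) matches the paper's, and your ``clean case'' is fine, but it is essentially vacuous: if $\alpha_1$ and $\alpha_2$ cobound a rectangle in $S-D$ containing no other decorating arcs, they are already parallel and adjacent, i.e.\ in one group. The substantive situation is precisely when the two arcs are \emph{not} isotopic in $S$ (e.g.\ they run around different holes of the 2-holed disk, as in Figure~\ref{Lemma4}), and there your argument has a real gap. The moves you propose --- ``sliding one arc across $D$'' and ``pushing it around the inner boundary'' --- are not justified and in fact cannot work as stated. Sliding a decorating arc across $D$ moves part of $R$ through the ball $B$, but Assumption~2 only permits ambient isotopies of $J$ that pointwise fix $B$, and Assumption~1 together with condition~(2) forbids extra decorating structure meeting $D$ or $\partial B$. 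Pushing an arc ``around an inner boundary'' of $S$ is not an isotopy of a properly embedded arc in $S$ (the arcs are genuinely non-isotopic there), and as an ambient move it would have to drag a band of $R$ around a component of $\partial S=J$, which you have not shown preserves the isotopy class of $J$ rel $B$ or the representability of the result by a decorated planar surface.

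The idea you are missing is the paper's flype: since $D$ is connected and disjoint from $N(\alpha_1)\cup N(\alpha_2)$, the component of $S\cup D-(N(\alpha_1)\cup N(\alpha_2))$ on one side is disjoint from $D$, so the corresponding piece $Y$ of $R\cup B-(N(\alpha_1)\cup N(\alpha_2))$ is disjoint from $B$ and can be flipped over $180^\circ$ while fixing $B$ pointwise. This transfers the half-twist at $\alpha_1$ to a new half-twist parallel to $\alpha_2$. Note that this is the component disjoint from $D$, not your ``innermost'' region $A$, which may be the one containing $D$. A second gap: a single such move does not reduce either quantity in condition~(3) --- the arc count is unchanged, and moving one arc of $\alpha_1$'s group next to $\alpha_2$'s group leaves the number of groups the same unless $\alpha_1$'s group was a singleton. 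One must iterate the flype until the entire group of arcs parallel to $\alpha_1$ has been transferred, and only then does the number of groups of parallel arcs drop, yielding the contradiction. Your write-up does not address this iteration.
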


Figure~\ref{Lemma4} illustrates a pair of decorating arcs which are excluded by Lemma~\ref{separatingpairs}.

\begin{figure}[h]
\includegraphics[width=2.3in]{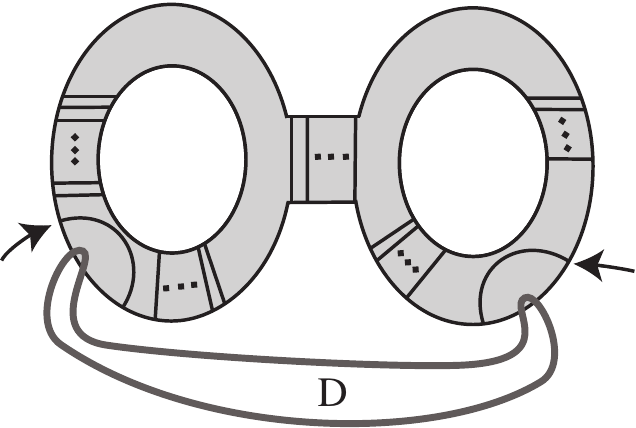}
\caption{This pair of decorating arcs together separates $S \cup D$.}  \label{Lemma4}
\end{figure}

\begin{proof} Suppose that $\alpha_1$ and $\alpha_2$ are arcs on $S - D$ belonging to distinct groups of parallel arcs such that $S \cup D - ( \alpha_1 \cup \alpha_2)$ has more than one component.   One of the components of $S \cup D - (N(\alpha_1) \cup N(\alpha_2))$ is disjoint from $D$.  Thus some component $Y$ of $R \cup B - ( N(\alpha_1) \cup N(\alpha_2))$ is disjoint from $B$.  Now we can ``flip $Y$ over'' while pointwise fixing $B$ and then simplify to remove the half-twist represented by $\alpha_1$ from $R$, while causing an additional half-twist represented by an arc parallel to $\alpha_2$ to be added to $R$ (see Figure~\ref{Lemma4Proof})

\begin{figure}[h]
\includegraphics[width=3.5in]{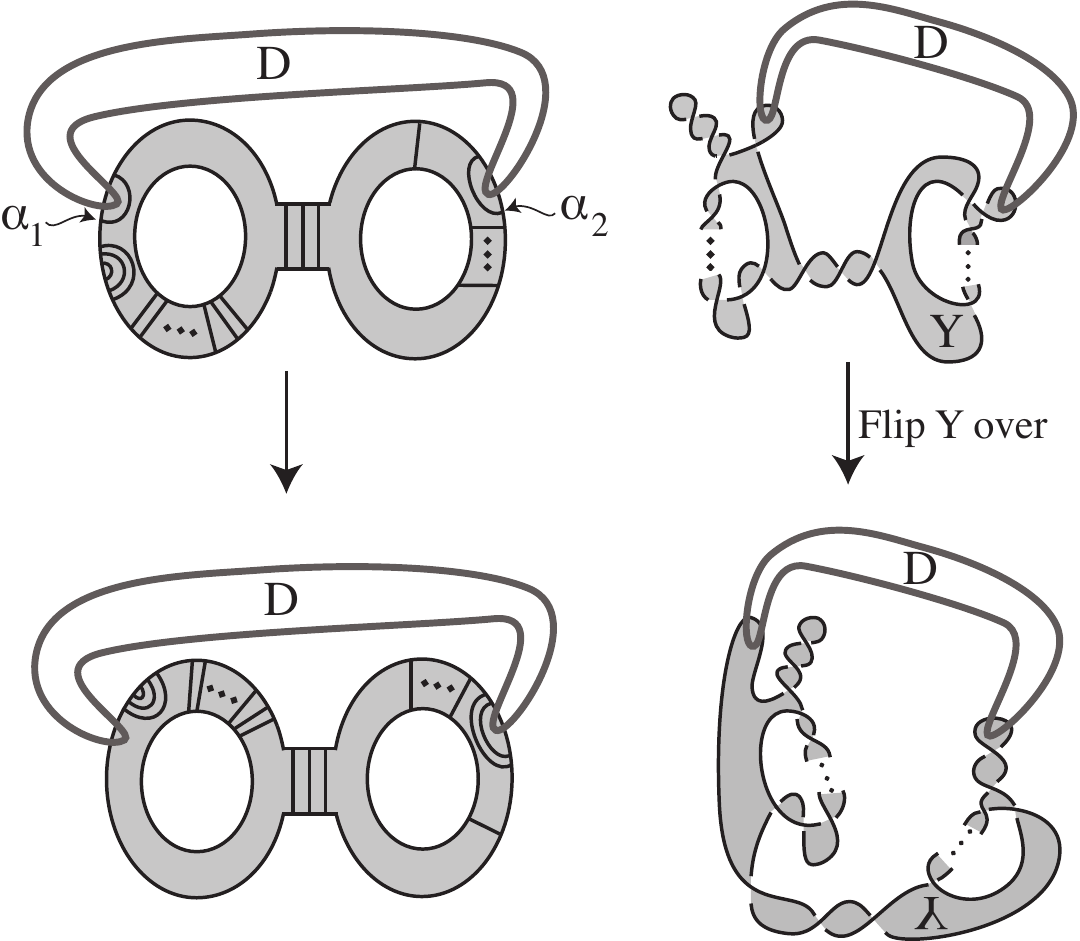}
\caption{We flip $Y$ over to remove $\alpha_1$ and add an arc parallel to $\alpha_2$.}  \label{Lemma4Proof}
\end{figure}

Continue this process, each time reducing the number of arcs on $S$ that are parallel to $\alpha_1$ by one, while increasing the number of arcs that are parallel to $\alpha_2$ by one.  In this way, we eventually eliminate the entire group of arcs parallel to $\alpha_1$.   However, this contradicts the minimality of the number of groups of parallel arcs required by condition (3) of Assumption~2.\end{proof}

\medskip

Before stating the next lemma, we need to introduce some additional terminology.   Given two distinct components of $\partial S$, we define the {\it algebraic twisting number} of the collection of all decorating arcs $S$ between these two components to be the number of these arcs representing positive half-twists of $R$ minus the number of these arcs representing negative half-twists of $R$.  Note that not all of the arcs in such a collection are necessarily parallel, since there may be arcs from one of the components to itself in the middle of the collection.

Observe that since $S$ is a 2-holed disk contained in the sphere $P$, the three components of $P-S$ are all disks.  However, for any point $x$ in $P-S$, the set $P-\{x\}$ is a plane and hence only two of the components of $P-\{x\}-S$ are disks.  In this case, we refer to the components of $\partial S$ which bound disks in $P-\{x\}-S$ as {\it inner components of $\partial S$ in $P-\{x\}$}.

\begin{lemma}  \label{s=0}
There is a point $x$ in $P-(S\cup D)$ such that the algebraic twisting number of the collection of arcs between the two inner components of $\partial S$ in the plane $P-\{x\}$ is  zero.
\end{lemma}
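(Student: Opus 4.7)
The plan. Label the three boundary components of $S$ as $C_0, C_1, C_2$, and let $D_0, D_1, D_2$ be the three disk components of $P - S$, with $\partial D_i = C_i$. For any $x \in D_i$, the two inner components of $\partial S$ in the plane $P - \{x\}$ are precisely $C_j$ and $C_k$ with $\{i,j,k\} = \{0,1,2\}$. Writing $t_{jk}$ for the algebraic twisting number of the decorating arcs of $S$ between $C_j$ and $C_k$, it suffices to exhibit an index $i$ with $t_{jk} = 0$; given such $i$, the point $x$ may then be chosen in $D_i$ and outside $D$, since by Lemma~\ref{lemma:forms} the intersection $D \cap P$ is a compact subset near $S$ that leaves each $D_i$ with nonempty interior.

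I plan to argue by contradiction, assuming $t_{01}$, $t_{02}$, and $t_{12}$ are all nonzero, and deriving a violation of condition~(3) of Assumption~\ref{assumption:2}. The key input is the connected-sum structure of $J = T(2,n) \# T(2,m)$: there is a 2-sphere $\Sigma \subset \mathbb{R}^3$ meeting $J$ transversely in two points and realizing the decomposition into the $T(2,n)$ and $T(2,m)$ summands. By isotoping $J$ (pointwise fixing $B$ as Assumption~\ref{assumption:2} permits) and applying standard innermost-disk and outermost-arc reductions to $\Sigma \cap R$ and $\Sigma \cap P$, I reduce to the case that $\Sigma \cap R$ is a single arc $\tau$. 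I then further isotope so that $\tau$ lies on the planar skeleton $S$ and is disjoint from every tubular neighborhood $N(\alpha_i)$ of a decorating arc.

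Once $\tau$ is arranged this way, cutting $S$ along $\tau$ produces two sub-surfaces $S_n$ and $S_m$, one for each connect summand. Since $\Sigma$ realizes the connected sum, $S_n$ contains the hole of the $T(2,n)$ summand (say $C_1$) and $S_m$ contains the hole of the $T(2,m)$ summand ($C_2$), while $C_0$ is split by the endpoints of $\tau$ into two arcs, one on each side. Because $\tau$ is disjoint from every decorating arc, each decorating arc lies entirely inside $S_n$ or inside $S_m$; in particular, no decorating arc has one endpoint on $C_1$ and the other on $C_2$, so $t_{12} = 0$, contradicting the hypothesis.

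The main obstacle is the final isotopy, which must both push $\tau$ onto $S$ and make $\tau$ avoid every $N(\alpha_i)$. Moving $\tau$ across a twist band onto the planar skeleton is handled by Assumption~\ref{assumption:2}(2), which keeps the $N(\alpha_i)$ away from $\partial B$ so the local isotopy is unobstructed. Clearing the interior crossings of $\tau$ with the $N(\alpha_i)$'s is the delicate step: if such a crossing were essential, then after surgering $\tau$ and reassembling the decorating arcs one would produce a decorated planar surface representing the same $R$ but with strictly fewer decorating arcs or strictly fewer groups of parallel arcs, contradicting Assumption~\ref{assumption:2}(3) in conjunction with Lemmas~\ref{lemma:nonugatory} and~\ref{separatingpairs}.
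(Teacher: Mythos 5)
Your overall strategy --- assume all three pairwise algebraic twisting numbers are nonzero and derive a contradiction from the connected-sum structure of $J$ --- is the right shape, and your endgame (choosing $x$ in the disk of $P-S$ not meeting the zero collection, using Lemma~\ref{lemma:forms} to see that this disk is not swallowed by $D$) matches the paper. But the core of your argument has a genuine gap. Everything hinges on the claim that the arc $\tau=\Sigma\cap R$ can be isotoped to lie on the planar skeleton $S$ \emph{and} to miss every band neighborhood $N(\alpha_i)$; only then does cutting along $\tau$ force every decorating arc into one side and give $t_{12}=0$. Your justification for clearing the bands --- that an ``essential crossing'' of $\tau$ with some $N(\alpha_i)$ would, ``after surgering $\tau$ and reassembling the decorating arcs,'' yield a decorated surface with fewer arcs or fewer groups, contradicting condition~(3) of Assumption~\ref{assumption:2} --- is an assertion, not an argument. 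Condition~(3) is a minimality statement about the combinatorics of $S$; it says nothing about where a connected-sum sphere can sit relative to the bands, and no mechanism is given by which a crossing of $\tau$ with a band produces a competing decorated surface with a smaller count. This missing step is precisely the mathematical content of the lemma: what you are trying to show is, in effect, that a pretzel link with all three twist regions nonzero cannot be a nontrivial connected sum. The paper does not attempt to prove this from scratch; it observes that if $r_1,r_2,r_3$ are all nonzero then $J$ is a nontrivial pretzel link, and then invokes the theorem of Montesinos and Waldhausen that nontrivial pretzel links are prime, contradicting $J=T(2,n)\#T(2,m)$. Without either that citation or a genuine replacement for it, your proof does not close.

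A secondary, smaller gap: your reduction of $\Sigma\cap R$ to a single arc by ``standard innermost-disk and outermost-arc reductions'' also needs care, since removing an intersection circle that is essential on $R$ but bounds a disk on $\Sigma$ would require compressing or re-embedding $R$, and you must preserve throughout that $R$ is still represented by a decorated $2$-holed disk satisfying Assumption~\ref{assumption:2}. Likewise, your claim that the two endpoints of $\tau$ land on the same boundary component $C_0$ and that each complementary piece contains exactly one hole deserves a sentence (if one piece contained no hole and no band, $\Sigma$ would not realize a nontrivial decomposition). These could be patched, but the band-avoidance step above cannot be patched without importing the primeness result the paper relies on.
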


\begin{proof} Consider the collections of decorating arcs on $S$ that have endpoints on distinct boundary components of $\partial S$, ignoring any arcs with endpoints on the same component of $\partial S$.  Let $r_1$, $r_2$, and $r_3$ be the algebraic twisting numbers of these three collections (see Figure \ref{figure:pretzelS}).  If none of $r_1$, $r_2$, or $r_3$ is zero, then $J$ is a non-trivial pretzel link.  However, it follows from \cite{Montesinos, Waldhausen, Waldhausen2} that all non-trivial pretzel links are prime (see also \cite{Bonahon}).  But this is impossible since according to Assumption~2, $J$ is a connected sum of non-trivial torus links. Thus for some~$i$, we must have $r_i=0$.

 \begin{figure}[h]
\includegraphics[width=1.9in]{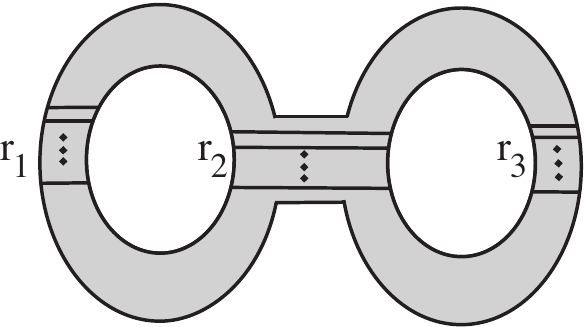}
\caption{$r_1$, $r_2$ and $r_3$ are the algebraic twisting numbers of the collections of parallel arcs between distinct components of $\partial S$.}  \label{figure:pretzelS}
\end{figure}

Let $X$ denote the collection of arcs with algebraic twisting number $r_i=0$. The arcs in $X$ all run between the same two components of $\partial S$.  So let $Z$ denote the component of $P-S$ which does not intersect any of the arcs in $X$.  If $Z$ were entirely contained in $D$, then the component of $\partial S$ bounding $Z$ would be entirely contained in $D$.  But this is impossible since $S\cap D$ has one of the forms illustrated in Figure~\ref{Lemma1}.  Thus we can choose a point $x\in Z-D$.  Now $x\in P-(S\cup D)$ and $X$ is the collection of arcs between the two inner components of $\partial S$ in the plane $P-\{x\}$.  Since $r_i=0$, we are done.   \end{proof}

\medskip 
 
From now on, we fix the point $x$ as given by Lemma~\ref{s=0} and draw all subsequent diagrams of $S\cup D$ in the plane $P-\{x\}$.  In particular, by Lemma~\ref{s=0}, $S$ looks like Figure~\ref{figure:pretzelS} with $r_2=0$.

\begin{lemma} \label{lemma:nocenterarcs}  Suppose that $S$ contains a decorating arc in the collection of arcs $X$ between the two inner components of $\partial S$ in the plane $P-\{x\}$.  Then $S$ has exactly two such arcs, one of which is contained in $D$, and the sign of the arc not in $D$ is opposite the sign of the arc in $D$.  
\end{lemma}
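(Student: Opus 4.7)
My plan is to combine the sign balance from Lemma \ref{s=0} with the non-separation principle of Lemma \ref{separatingpairs}, reducing the argument to a case analysis on the form of $S\cap D$. By Lemma \ref{s=0}, $r_2=0$, so the decorating arcs of $X$ carry equally many positive and negative signs; in particular $|X|$ is even and, since $X\ne\emptyset$, at least $2$. By Assumption \ref{assumption:1}, at most one decorating arc of $S$ lies in $D$, hence at most one arc of $X$ is in $D$. Granting the key claim that no two arcs of $X\cap(S-D)$ can carry opposite signs, the sign balance then forces precisely one arc of $X$ in $D$ and precisely one in $S-D$, of opposite signs, as desired.

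To prove the key claim I would argue by contradiction: suppose $\alpha_1,\alpha_2\in X\cap(S-D)$ have opposite signs. Since they are disjoint arcs between the two inner boundary components $C_1,C_2$ of the $2$-holed disk $S$, they separate $S$ into exactly two pieces, and the goal is to show that they also separate $S\cup D$, whereupon Lemma \ref{separatingpairs} forces them into a single parallel group and hence to share a sign. The separation argument splits using Lemma \ref{lemma:forms}. If $S\cap D$ is a strip, its connectedness confines $S\cap D$ to one component of $S-(\alpha_1\cup\alpha_2)$, so $D$ is attached to $S$ in a single component. If $S\cap D$ is two disks attached to the outer component $C_3$, both disks lie in the ``outside'' piece and the same conclusion holds. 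If $S\cap D$ is two disks attached to an inner component $C_i$, I would examine the cyclic order on $C_i$ of the two attachment arcs $A_1,A_2$ and the two $\alpha_j$-endpoints: in any non-alternating order, $A_1$ and $A_2$ lie on a common arc of $C_i$ cut by the $\alpha$-endpoints, so both disks fall into the same component of $S-(\alpha_1\cup\alpha_2)$. In each of these subcases $\alpha_1,\alpha_2$ separate $S\cup D$, and Lemma \ref{separatingpairs} yields the contradiction.

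The main obstacle is the remaining alternating subcase, in which $A_1,A_2$ and the $\alpha$-endpoints interleave around $C_i$; then $D$ bridges the two components of $S-(\alpha_1\cup\alpha_2)$ and $\alpha_1,\alpha_2$ fail to separate $S\cup D$, so Lemma \ref{separatingpairs} does not apply directly. I would handle this by exploiting the bridge through $D$: perform a local modification of the representing surface $S$ that slides one attachment arc past an $\alpha_j$-endpoint along the interior of $D$, converting the alternating pattern into a non-alternating one. Verifying that such a rearrangement can be carried out without increasing either the number of decorating arcs or the number of parallel groups, so that either the previous separation argument now applies or the minimality clause (3) of Assumption \ref{assumption:2} is directly contradicted, is the delicate part of the proof.
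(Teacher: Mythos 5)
Your opening reductions are correct and match the paper: Lemma~\ref{s=0} gives sign balance in $X$, Assumption~\ref{assumption:1} gives at most one arc of $X$ in $D$, and everything comes down to showing that no two arcs of $X$ lying in $S-D$ have opposite signs. But your proof of that key claim has a genuine gap, and moreover routes through machinery that cannot close it. First, the admitted hole: in the ``alternating'' subcase (two disks of $S\cap D$ attached to an inner component, with attachment arcs interleaving the endpoints of $\alpha_1,\alpha_2$), you only sketch a plan to re-attach $D$ and defer the verification; that subcase is exactly where $\alpha_1\cup\alpha_2$ fails to separate $S\cup D$, so Lemma~\ref{separatingpairs} gives you nothing and the claim is simply unproved there. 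Second, even in the subcases where the separation argument works, Lemma~\ref{separatingpairs} only concludes that $\alpha_1$ and $\alpha_2$ lie in a single group of parallel arcs; it says nothing about signs. Your final step ``hence they share a sign'' would itself have to be proved by observing that two adjacent parallel arcs of opposite sign represent cancelling half-twists and can be removed by an ambient isotopy fixing $B$, contradicting the minimality of the number of decorating arcs in condition~(3) of Assumption~\ref{assumption:2}.

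That last observation is the paper's entire proof, and it makes your detour unnecessary: if two arcs of $X$ have opposite signs and \emph{neither} lies in $D$, then the corresponding pair of opposite half-twists of $R$ sits entirely away from the enzyme ball, so they can be cancelled by an ambient isotopy of $J$ pointwise fixing $B$, reducing the number of decorating arcs and contradicting condition~(3) of Assumption~\ref{assumption:2} directly. No separation of $S\cup D$, no appeal to Lemma~\ref{separatingpairs}, and no case analysis on the form of $S\cap D$ is needed. I recommend replacing your key-claim argument with this one-step cancellation; as written, your proof is incomplete in the alternating subcase and circular in the others.
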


\begin{proof}  By Lemma~\ref{s=0}, the algebraic twisting number of the arcs in $X$ is zero.  Thus every arc in $X$ must be paired up with another arc in $X$ of opposite sign.  If neither arc in such a pair is contained in $D$, then there would be an ambient isotopy of $J$ pointwise fixing $B$ which would reduce the number of decorating arcs of $S$.  Since this contradicts the minimality of the number of arcs given in condition~(3) of Assumption~2, every arc in $X$ must be paired up with an arc of opposite sign in $D$.  However,  by Assumption~1, $S\cap D$ contains at most one decorating arc.  So $X$ contains exactly two arcs, one of which is contained in $D$, and the sign of the arc not in $D$ is opposite the sign of the arc in $D$. \end{proof}

\medskip

\begin{lemma}\label{configurations}  $S\cup D$ is depicted in the plane $P-\{x\}$ by one of the illustrations in Figure~\ref{figure:possibleSDcomplex}.

\end{lemma}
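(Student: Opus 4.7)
The plan is to use Lemmas~\ref{lemma:forms}--\ref{lemma:nocenterarcs} together as a constraint system and enumerate the resulting configurations. By Lemma~\ref{s=0}, after fixing the point $x$ we may draw $S$ in the plane $P-\{x\}$ as in Figure~\ref{figure:pretzelS} with $r_2=0$: the three components of $\partial S$ split the decorating arcs of $S$ into (at most) three ``bands'' of parallel arcs lying in the three gap regions between pairs of boundary components, together with arcs whose endpoints lie on a single component. Lemma~\ref{lemma:nocenterarcs} then pins down the structure of the middle band between the two inner components of $\partial S$: either it is empty, or it consists of exactly two arcs of opposite sign, one lying in $D$ and one outside $D$. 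Finally, no arc can have both endpoints on a single component of $\partial S$, since such an arc would separate $S\cup D$, contradicting Lemma~\ref{lemma:nonugatory}.

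With the gross shape of $S$ understood, the plan is to case-split on the form of $S\cap D$ given by Lemma~\ref{lemma:forms}. First, if $D$ meets more than one component of $\partial S$, Lemma~\ref{lemma:forms} forces $S\cap D$ to be a single strip; within this case I would further split on which pair of boundary components $D$ bridges (between the two inner components, or between the outer and one inner component), and whether the optional single decorating arc of $S\cap D$ is present. Second, if $D$ meets only one component of $\partial S$, then $S\cap D$ is either a strip with both feet on that component or two disks on that component, giving two further sub-cases to draw. In each of these sub-cases, the side bands between the outer component and the two inner components contribute their own groups of parallel arcs (of algebraic twisting numbers $r_1$ and $r_3$); Lemma~\ref{separatingpairs} rules out splitting either side band into multiple distinct groups of parallels, since any two non-parallel arcs running between the same two components of $\partial S$ would together separate $S\cup D$. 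Matching each surviving configuration with a picture in Figure~\ref{figure:possibleSDcomplex} completes the proof.

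The step I expect to be the main obstacle is the bookkeeping of the case split, rather than any single deep idea: one must verify that no admissible placement of $D$ on the pretzel-like picture of $S$ has been overlooked, and simultaneously check that the diagrams in Figure~\ref{figure:possibleSDcomplex} exhaust the surviving cases once the forbidden configurations of Lemmas~\ref{lemma:nonugatory}--\ref{lemma:nocenterarcs} are removed. In particular, one has to be careful with the case where the arc in $D$ guaranteed by Lemma~\ref{lemma:nocenterarcs} forces $D$ to bridge the two inner boundary components; in that case $D$ must be a strip (by Lemma~\ref{lemma:forms}) carrying a single decorating arc, and this interacts with the side bands in a specific way that must be drawn out carefully.
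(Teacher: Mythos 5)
Your overall strategy---case-splitting on the form of $S\cap D$ given by Lemma~\ref{lemma:forms} and then pruning decorating arcs using Lemmas~\ref{lemma:nonugatory}, \ref{separatingpairs}, \ref{s=0}, and \ref{lemma:nocenterarcs}---is the same as the paper's. However, there is a genuine error in one of your pruning steps: you assert that ``no arc can have both endpoints on a single component of $\partial S$, since such an arc would separate $S\cup D$, contradicting Lemma~\ref{lemma:nonugatory}.'' Such an arc always separates $S$ (because $S$ is planar), but it need not separate $S\cup D$: the disk $D$ can bridge the two pieces. This is exactly what happens when $S\cap D$ consists of two disks. In that case the two components of $S\cap D$ lie on a single component of $\partial S$, and a decorating arc separating them cuts $S$ into two pieces each of which meets $D$, so $S\cup D-\alpha$ remains connected and Lemma~\ref{lemma:nonugatory} does not apply. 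These surviving arcs are precisely what produce the ``fingers'' configurations of Figures~\ref{figure:CasesFingersFar} and \ref{figure:CasesFingersNear}; your blanket exclusion would erase them and yield an incomplete (hence incorrect) list of configurations and, downstream, of products. The paper is explicit that such arcs can occur (``there may be arcs from one of the components to itself in the middle of the collection''). Your claim is valid only in the strip cases, where $S\cap D$ is connected and therefore lies entirely on one side of any separating arc of $S$.

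A second, smaller caution: your appeal to Lemma~\ref{separatingpairs} to force each side band into a single group of parallels relies on the claim that any two non-parallel arcs between the same two components of $\partial S$ together separate $S\cup D$. Again this is automatic for the separation of $S$, but the separation of $S\cup D$ depends on where $D$ sits, so it must be checked case by case (as the paper does when simplifying the diagrams in each of its figures). Finally, in the case where $S\cap D$ is a strip meeting only one component of $\partial S$, the paper further distinguishes whether or not $B$ separates the two connected summands of $J=T(2,n)\#T(2,m)$, which is what splits Figure~\ref{figure:CasesStripU} from Figure~\ref{figure:CasesStripCenter}; your enumeration should make that sub-case explicit.
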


Note that when two cases are identical except the two inner components of $\partial S$ have been switched, we depict only one of the cases in Figure~\ref{figure:possibleSDcomplex}.  Also in the figure some of the groups of arcs that are illustrated might actually contain zero arcs.

\begin{figure}[htpb]
\begin{subfigure}[b]{0.45\textwidth}
\centering
\includegraphics[width=0.45\textwidth]{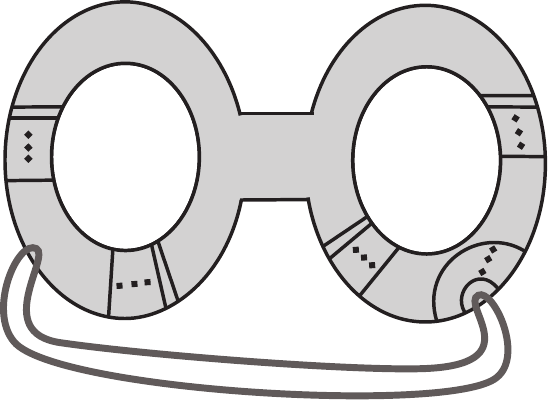}
 \caption{}  \label{figure:CasesFingersFar}
\end{subfigure}
\begin{subfigure}[b]{0.45\textwidth}
\centering
\includegraphics[width=0.45\textwidth]{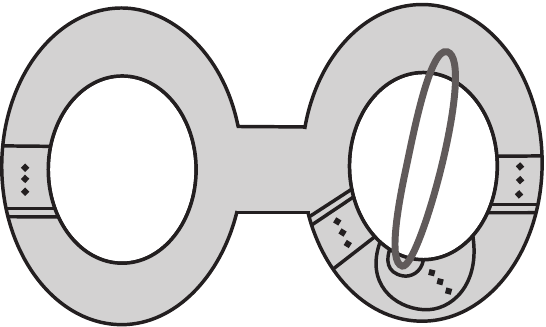}
\includegraphics[width=0.5\textwidth]{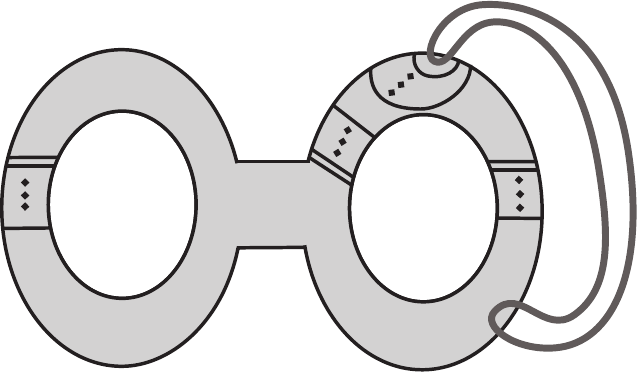}
\caption{}   \label{figure:CasesFingersNear}
\end{subfigure}

\begin{subfigure}[b]{\textwidth}
\centering
\includegraphics[width=0.2\textwidth]{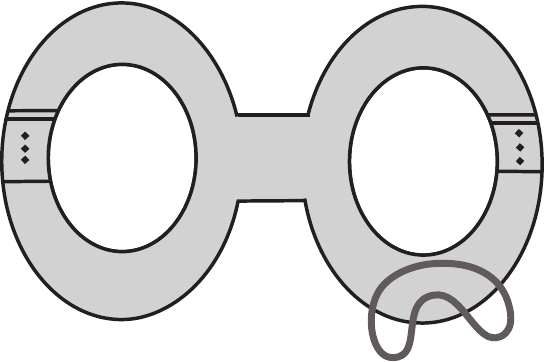}
\includegraphics[width=0.2\textwidth]{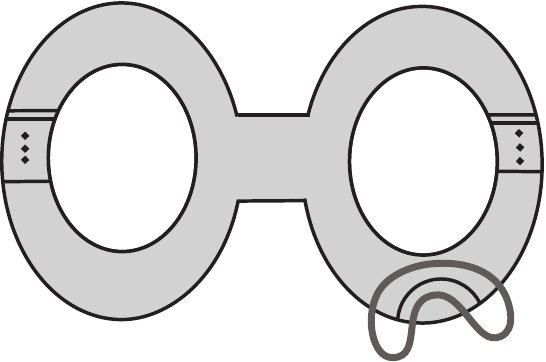}
\includegraphics[width=0.2\textwidth]{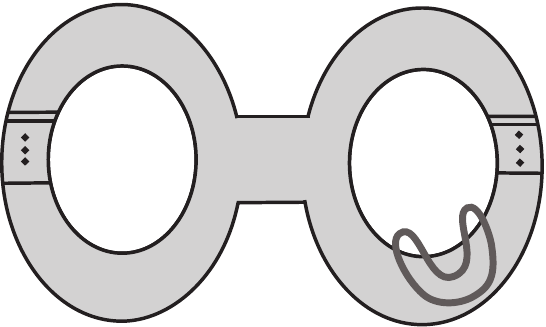}
\includegraphics[width=0.2\textwidth]{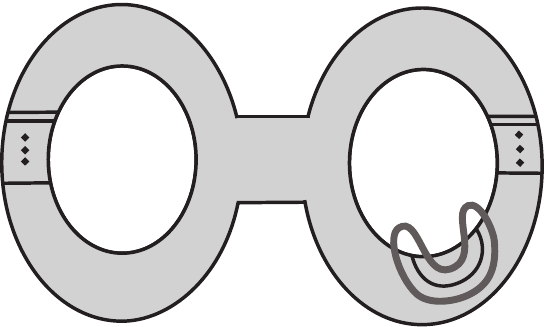}
\caption{}   \label{figure:CasesStripU}
\end{subfigure}
\begin{subfigure}[b]{0.45\textwidth}
\includegraphics[width=0.45\textwidth]{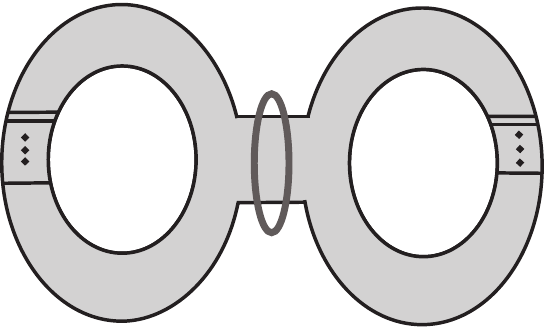}
\includegraphics[width=0.45\textwidth]{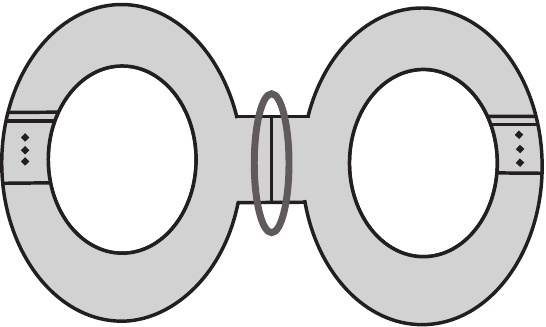}
\caption{}   \label{figure:CasesStripCenter}
\end{subfigure}
\begin{subfigure}[b]{0.45\textwidth}
\includegraphics[width=0.45\textwidth]{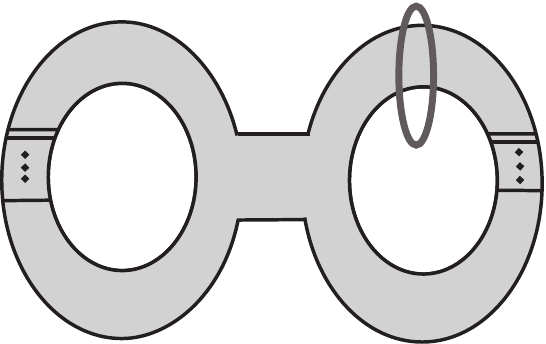}
\includegraphics[width=0.45\textwidth]{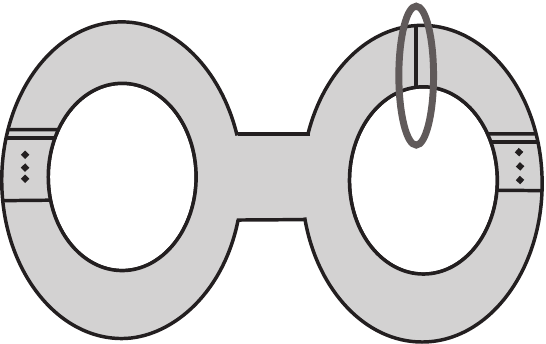}
\caption{}   \label{figure:CasesStripInOut}
\end{subfigure}

\begin{subfigure}[b]{0.45\textwidth}
\centering
\includegraphics[width=0.45\textwidth]{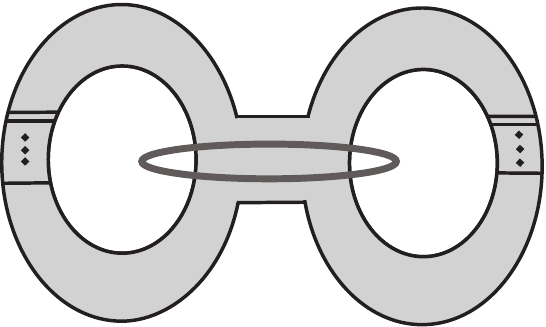}
\caption{} \label{figure:CasesStripCenterHorz}
\end{subfigure}
\begin{subfigure}[b]{0.45\textwidth}
\centering
\includegraphics[width=0.45\textwidth]{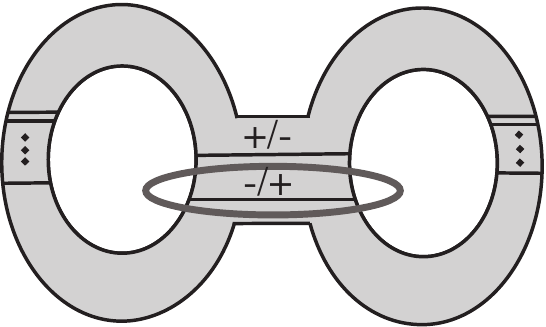}
\caption{} \label{figure:CasesStripCenterHorzArc}
\end{subfigure}

\caption{These are all possible forms of $S \cup D$ in the plane $P-\{x\}$.} \label{figure:possibleSDcomplex}
\end{figure}

\begin{proof}  We begin with some general observations that will apply to all cases.  First by Lemma~\ref{lemma:forms}, $S\cap D$ is either two disks or a strip containing at most one decorating arc.  Also, by Lemma~\ref{lemma:nonugatory}, no decorating arc on $S - D$ separates $S \cup D$.  
Furthermore, by Lemmas~\ref{s=0} and \ref{lemma:nocenterarcs}, if there are any arcs between the two inner components of $\partial S$ in the plane $P-\{x\}$, then there are precisely two such arcs with opposite signs and one of these arcs is contained in $D$.

Now we consider the cases, one at a time.  First suppose that $S \cap D$ is two disks.  Then it follows from the second part of Lemma~\ref{lemma:forms} that $D$ intersects only one component of $\partial S$.  Hence $S\cup D$ looks like the one of the illustrations to the left of an arrow in Figure~\ref{9ABSimplified}.  However, by Lemma~\ref{separatingpairs}, we can remove some arcs to get the illustrations to the right of the arrows in Figure~\ref{9ABSimplified}.  Hence $S\cup D$ has one of the forms depicted in Figure \ref{figure:CasesFingersFar} or \ref{figure:CasesFingersNear}.

 \begin{figure}[h]
\includegraphics[width=5.6in]{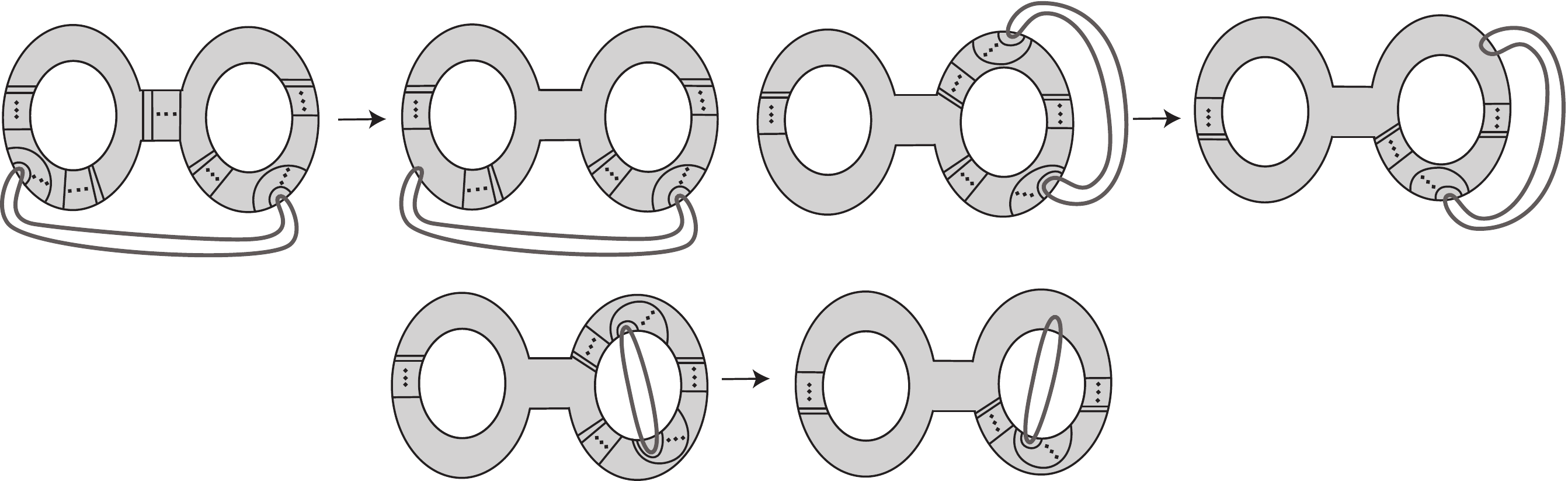}
\caption{Cases when $S \cap D$ is two disks.}  \label{9ABSimplified}
\end{figure}

Next suppose that $S\cap D$ is a strip intersecting only one component of $\partial S$.  If $B$ does not split apart the connected summands of $J=T(2,n)\#T(2,m)$, then $S\cup D$ looks like one of the illustrations in Figure~\ref{9CSimplified}.  Hence again by Lemma~\ref{separatingpairs}, the illustrations in the figure can be simplified to get those to the right of the arrows.  Hence $S\cup D$ has one of the forms illustrated in either Figure \ref{figure:CasesStripU} or Figure~\ref{figure:CasesStripCenter} according to whether or not $B$ splits apart the connected summands of $J=T(2,n)\#T(2,m)$.

 \begin{figure}[h]
\includegraphics[width=5.5in]{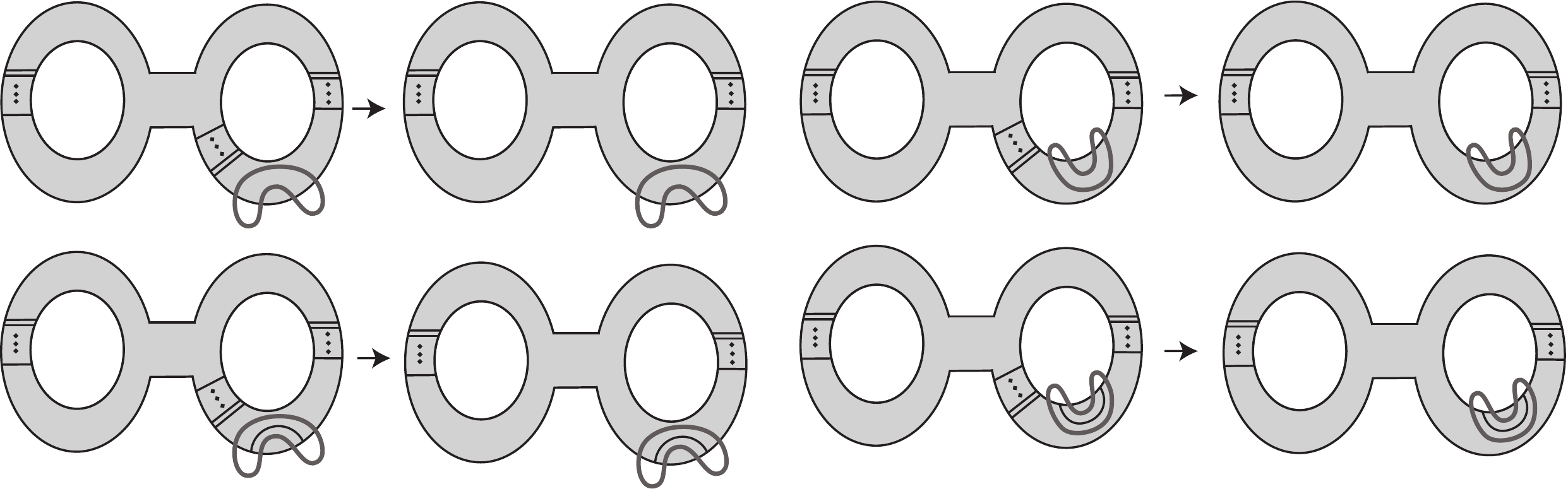}
\caption{Cases when $S\cap D$ is a strip intersecting only one component of $\partial S$ and $B$ does not split $J=T(2,n)\#T(2,m)$.}  \label{9CSimplified}
\end{figure}

Now suppose that $S\cap D$ is a strip intersecting an inner component and the outer component of $\partial S$ in the plane $P-\{x\}$.  Then $S\cup D$ looks like the one of the illustrations in Figure~\ref{9ESimplified}, which can again be simplified by Lemma~\ref{separatingpairs} to get the illustrations to the right of the arrows.   Thus $S\cup D$ has one of the forms illustrated in Figure~\ref{figure:CasesStripInOut}.

 \begin{figure}[h]
\includegraphics[width=5.5in]{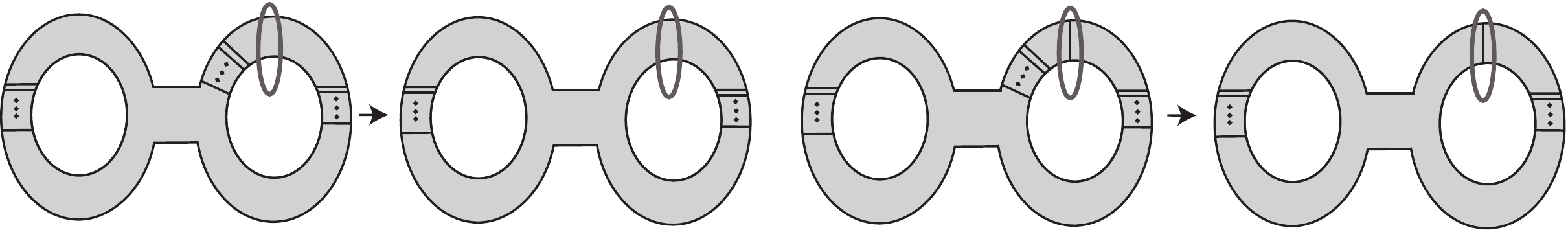}
\caption{Cases when $S\cap D$ is a strip intersecting an inner and  outer component of $\partial S$.}  \label{9ESimplified}
\end{figure}

Finally, suppose that $S\cap D$ is a strip intersecting both inner components of $\partial S$ in the plane $P-\{x\}$.  Then by Lemma~\ref{lemma:nocenterarcs}, $S\cup D$ has one of the forms illustrated in \ref{figure:CasesStripCenterHorz} or \ref{figure:CasesStripCenterHorzArc}.  
%In Figure~\ref{figure:CasesStripCenterHorzArc} the two horizontal arcs in the center of $S$ represent oppositely oriented half-twists.  
\end{proof}

\bigskip

\section{Classification Theorem}

\begin{thm} \label{thm}
Suppose that Assumptions 1, 2, and 3 hold for a particular serine or tyrosine substrate-enzyme complex with substrate $J=T(2,n)\#T(2,m)$.  Then every product of site-specific recombination is in one of the two families illustrated in Figure~\ref{Families}.\end{thm}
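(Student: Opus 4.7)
The strategy is to take the finite case list produced by Lemma~\ref{configurations} as the starting point and simply run Assumption~3 through each configuration, then check that every resulting knot or link fits into one of the two advertised families. Concretely, I would fix one configuration from Figure~\ref{figure:possibleSDcomplex} at a time, replace the two arcs in $J\cap B$ by either the serine modification of Figure~\ref{figure:assumption3serine} or one of the tyrosine modifications of Figure~\ref{figure:assumption3tyrosine}, and then read off the resulting link from the planar diagram $S\cup D\subseteq P-\{x\}$.

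For the bookkeeping, I would isotope each picture in Figure~\ref{figure:possibleSDcomplex} into a standard normal form: a horizontal row of twist boxes along the top of the diagram corresponding to the two groups of arcs with nonzero algebraic twisting number $r_1$ and $r_3$ (which encode the $T(2,n)$ and $T(2,m)$ summands of $J$), plus whatever extra twist box is introduced inside $D$ by the recombinase. The two arcs sitting between the inner boundary components (when they exist) are guaranteed by Lemma~\ref{lemma:nocenterarcs} to cancel up to the one arc that can live inside $D$, so they contribute at most a single residual twist box adjacent to the enzyme region. After this normalization each of the seven cases (a)--(g) collapses to a small diagram whose only varying parameters are $n$, $m$, and the twist count $k$ produced by the enzyme; I would then match this against the two families in Figure~\ref{Families}. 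The cases where $B$ does not split the connected sum (Figures~\ref{figure:CasesFingersNear}, \ref{figure:CasesStripCenter}, \ref{figure:CasesStripCenterHorz}, \ref{figure:CasesStripCenterHorzArc}) will produce one family, while the cases where $B$ splits the summands (Figures~\ref{figure:CasesFingersFar}, \ref{figure:CasesStripU}, \ref{figure:CasesStripInOut}) will produce the other.

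Within each case I would handle both recombinase types in parallel: Assumption~3(1) contributes a row of $k$ half-twists inside $D$, and Assumption~3(2) contributes one of the small tyrosine tangles (including the bypass possibilities) which in all seven cases either reduces to a special value of the serine calculation ($k=0,\pm 1,\pm 2$) or to a diagram that is recognizably the same family member with a shifted parameter. The tyrosine ``nontrivial'' replacement with two crossings may create an extra twist box, but by the same kind of simplification used to derive Figure~\ref{figure:possibleSDcomplex} from the raw cases, it merges with the existing $D$-twist and contributes only a shift in the integer parameter of the family.

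The main obstacle I expect is purely combinatorial stamina: there are seven subfigures, each with several sign and orientation choices for the decorating arcs, and each has to be checked against both recombinase types. The genuinely delicate step is showing that the tyrosine tangle replacements in the configurations of Figures~\ref{figure:CasesStripCenterHorz} and \ref{figure:CasesStripCenterHorzArc}, where $D$ already meets both inner boundary components of $\partial S$, still yield a diagram in the claimed families; here the relevant check is that the algebraic cancellation guaranteed by Lemmas~\ref{s=0} and~\ref{lemma:nocenterarcs} survives the insertion of the new crossings, so that the product diagram can still be read as a connect sum or a plumbing of two $T(2,\cdot)$-type pieces with a single extra twist region, exactly matching the two families drawn in Figure~\ref{Families}.
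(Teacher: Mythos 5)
Your proposal follows essentially the same route as the paper: reduce to the finite list of configurations from Lemma~\ref{configurations}, apply Assumption~3 case by case for both recombinase types, and verify that each resulting planar diagram simplifies to one of the two families. The only caveat is your predicted dichotomy of which configurations yield which family --- in the paper's analysis the configuration of Figure~\ref{figure:CasesFingersNear}, for instance, produces members of both Family 1 and Family 2 depending on the subcase --- but this is a detail that the diagram-by-diagram verification would correct and does not affect the soundness of the approach.
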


 \begin{figure}[h]
\includegraphics[width=3.5in]{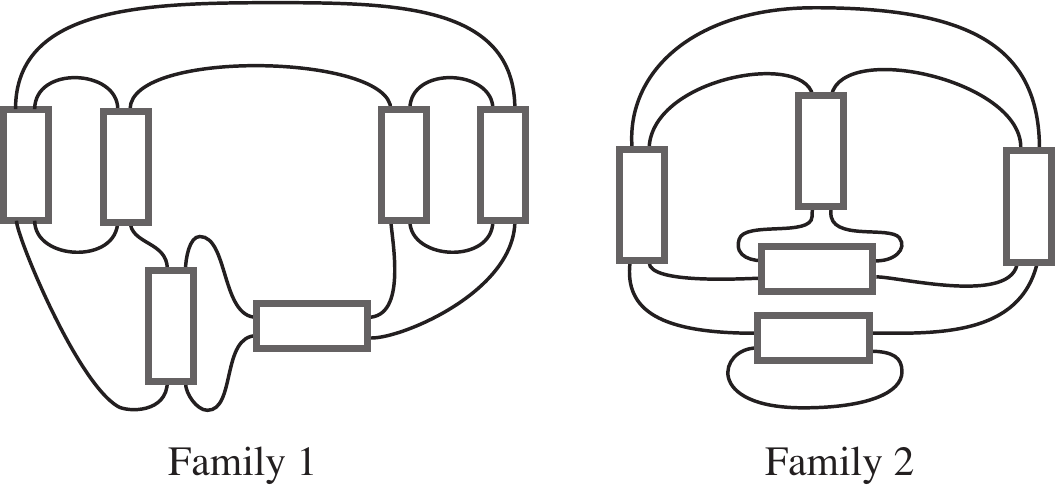}
\caption{Under the hypotheses of the theorem, every knotted or linked product is in one of these families.}  \label{Families}
\end{figure}

Note that in the proof we distinguish the products mediated by a serine recombinase from those mediated by a tyrosine recombinase.  

\begin{proof}  By Lemma~\ref{configurations}, we can assume that $S\cup D$ has one of the configurations in the plane $P-\{x\}$ illustrated in Figure~\ref{figure:possibleSDcomplex}.  Since these configurations describe all possibilities for the substrate-enzyme complex $J \cup B$, we can do our analysis entirely based on these illustrations. In what follows, we go through the configurations one at a time, applying Assumption~3 in each case to determine all possible knotted or linked products.  Note that if a rectangle contains no number or letter, then it is understood that there are no restrictions on the number of twists in the box.  However, crossings always go between the two short sides of the rectangle.

%%%%%%%
\begin{comment}

Consider the alterations enacted by the serine and tyrosine enzymes, as described in Assumption 3.  
Observe that they replace the strands of the substrate $J$ inside $B$ with either a vertical or horizontal sequence of half-twists.  Furthermore, 
 \begin{eqnarray*}
  \begin{minipage}{.7in}
\includegraphics[width=\textwidth]{verticalrectangle}
\end{minipage}
 &=& 
 \begin{minipage}{.7in}
\includegraphics[width=\textwidth]{ProductAssumption3}
\end{minipage}
\mbox{ when $p=k$ and $q=0$, and } 
\\[10pt]
 \begin{minipage}{.7in}
\includegraphics[width=\textwidth]{horizontalrectangle}
\end{minipage}
 &=& 
\begin{minipage}{.7in}
\includegraphics[width=\textwidth]{ProductAssumption3}
\end{minipage}
\mbox{ when $p = \pm1$ and $q = \mp 1+ k$. } 
\end{eqnarray*}
So we can describe the strands of the product inside $B$, resulting from both serine and tyrosine, using a single form but with different restrictions on $p$ and $q$. 
\end{comment}
%%%%%%%%

For the configuration of $S\cup D$ illustrated in Figure~\ref{figure:CasesFingersFar}, the possible products are illustrated in Figure~\ref{figure:ProductA}.  If the recombination is mediated by a serine recombinase, then we obtain links in Family~1 with no restrictions on $k$.  If the recombination is mediated by a tyrosine recombinase, then $|k| \leq 2$.   

 \begin{figure}[h]
\includegraphics[height=1.4in]{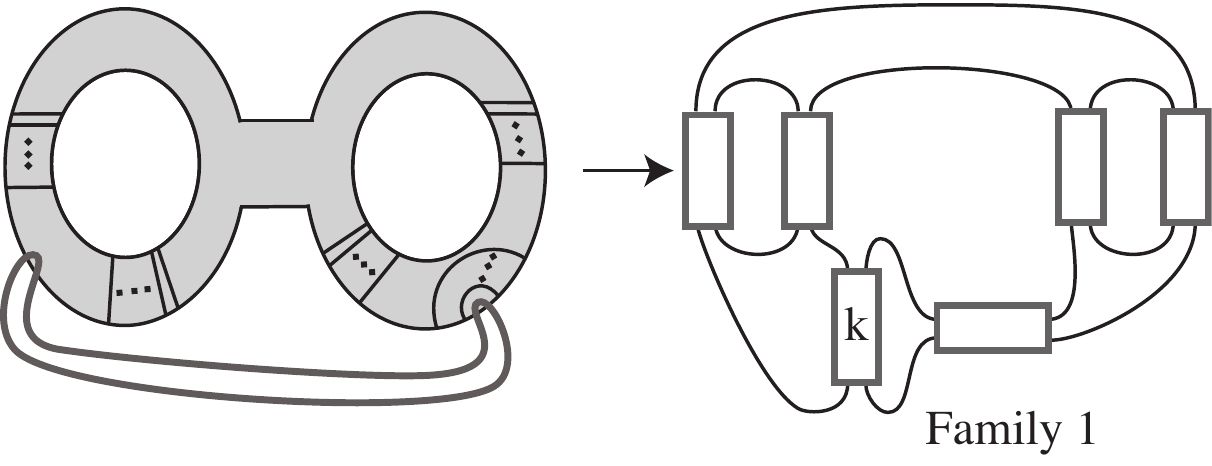}
\caption{Products when $S\cup D$ has the form of Figure~\ref{figure:CasesFingersFar}.} \label{figure:ProductA}
\end{figure}

For the configurations of $S\cup D$ illustrated in Figure~\ref{figure:CasesFingersNear}, the middle illustrations in Figure~\ref{figure:ProductB} describe the possible products.  However, after an isotopy we get the links on the far right.   If the recombination is mediated by a serine recombinase, then there are no restrictions on $k$.  If the recombination is mediated by a tyrosine recombinase, then $|k| \leq 2$.  Observe that in the first case we get links in Family 1, though the left and right sides of the illustration are reversed from what they are in Figure~\ref{Families}; and in the second case we get links in Family 2, though the top and bottom of the illustration are reversed from what they are in Figure~\ref{Families}. 
 \begin{figure}[h]
\includegraphics[height=3in]{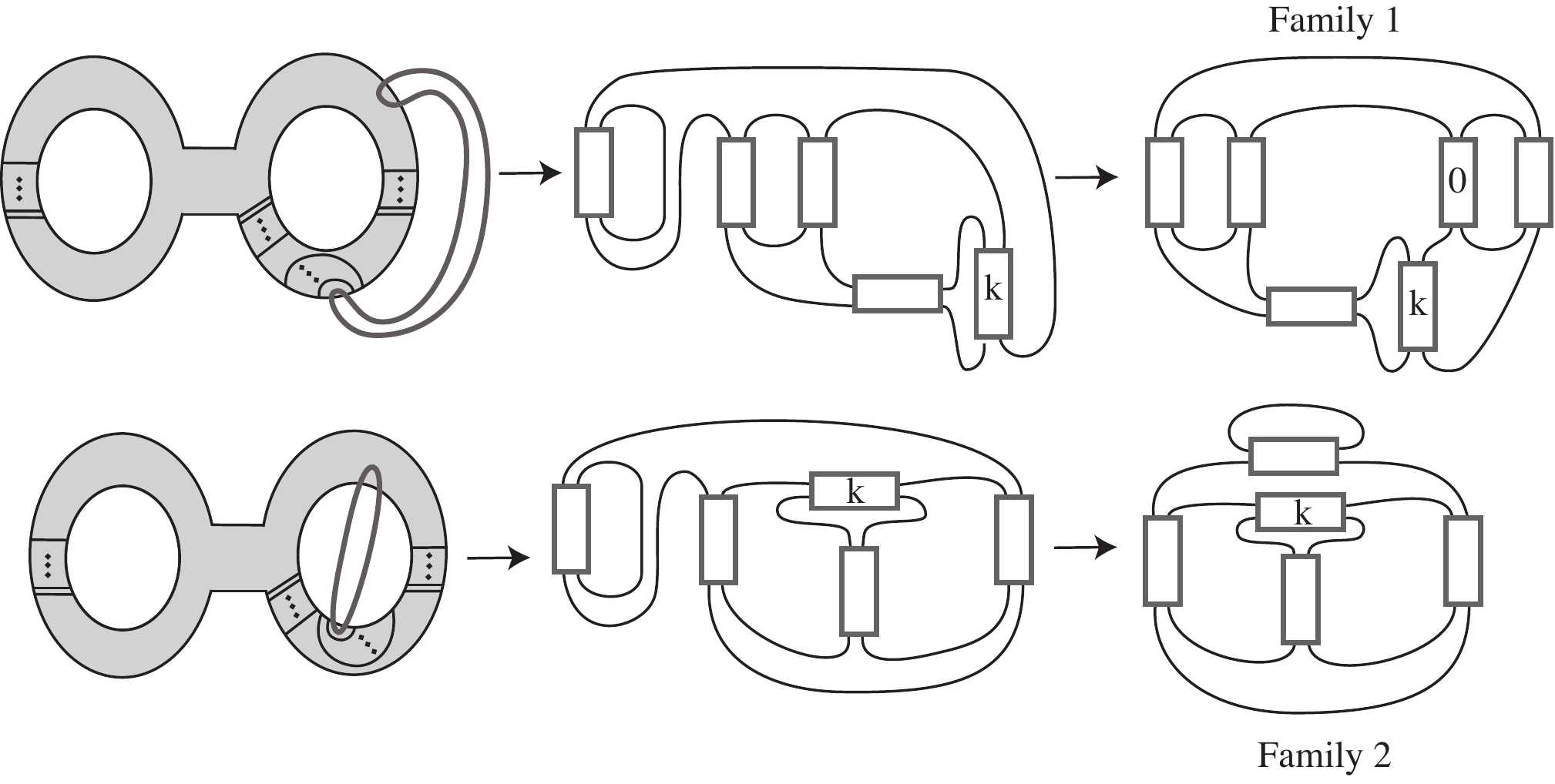}
\caption{Products when $S\cup D$ has the form of Figure~\ref{figure:CasesFingersNear}.  } \label{figure:ProductB}
\end{figure}

Next, consider the two configurations of $S\cup D$ in Figure~\ref{figure:CasesStripU}.  First suppose there is no decorating arc in $S \cap D$ (illustrated on the top left and right of Figure~\ref{figure:ProductC}).  Each round of processive recombination mediated by a serine recombinase inserts a nugatory crossing which can be removed by an isotopy.  Hence the products are the same as the substrate $J=T(2,n)\#T(2,m)$.  We can see the links $T(2,n)\#T(2,m)$ in the central illustration of Figure~\ref{figure:ProductC} by letting $k = \pm 1 $.    Now suppose there is a decorating arc in $S \cap D$ (illustrated on the bottom left and right of Figure~\ref{figure:ProductC}).  Then multiple rounds of processive recombination mediated by a serine recombinase replaces $J \cap B$ with a row of crossings. Since $S\cap D$ contains a decorating arc, we see from the illustration on the right in Figure~\ref{figure:assumption3serine} that this row can either be horizontal or vertical.  If the row of crossings is vertical, then the crossings are nugatory and we obtain the central illustration with $k = \pm 1$ as above.  If the row is horizontal, then we obtain a link again as in the central illustration of Figure~\ref{figure:ProductC}, and there are no restrictions on $k$.  Finally, suppose the recombination is mediated by a tyrosine recombinase.  Then $J \cap D$ is replaced by at most two consecutive crossings in either direction, and hence $|k| \leq 2$.

 \begin{figure}[htpb]
\includegraphics[height=2.2in]{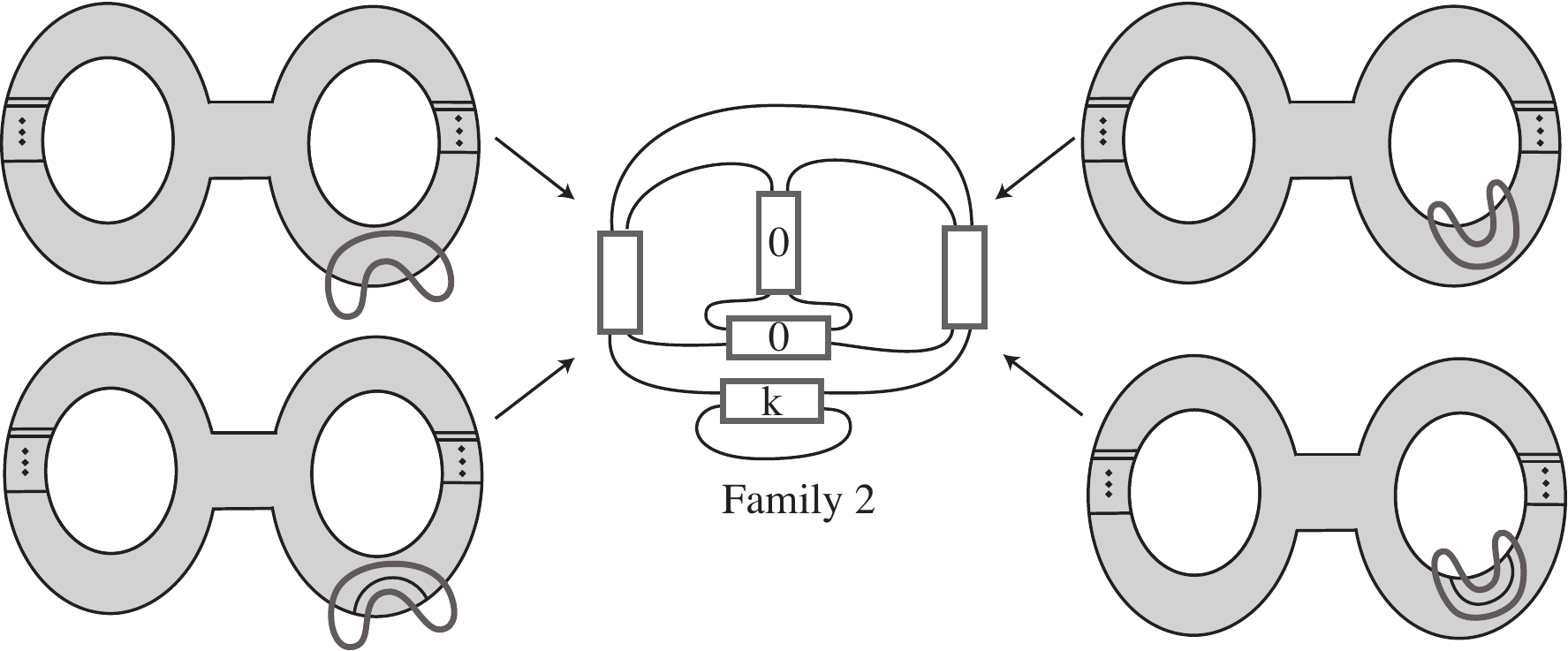}
\caption{Products when $S\cup D$ has the form of Figure~\ref{figure:CasesStripU}.}  \label{figure:ProductC}
\end{figure}

The analysis for the configurations of $S\cup D$ illustrated in Figure~\ref{figure:CasesStripCenter} is similar to Figure~\ref{figure:CasesStripU}.  When there is no decorating arc in $S \cap D$, each round of processive recombination mediated by a serine recombinase inserts a nugatory crossing which can be removed by an isotopy.  Hence the products again have the form $T(2,n)\#T(2,m)$, which we can see in the central illustration of Figure~\ref{figure:ProductD} by letting $k= \pm 1$.   When there is a decorating arc in $S \cap D$, then multiple rounds of processive recombination mediated by a serine recombinase replaces $J \cap B$ with either a row of nugatory crossings or a row of vertical crossings.  We can see the latter in the central illustration of Figure~\ref{figure:ProductD} with $|k| >1$.  
 If the recombination is mediated by a tyrosine recombinase, then $J \cap D$ is replaced by at most two consecutive crossings in either direction.  Hence in this case, $|k| \leq 2$.

 \begin{figure}[htpb]
\includegraphics[height=2.3in]{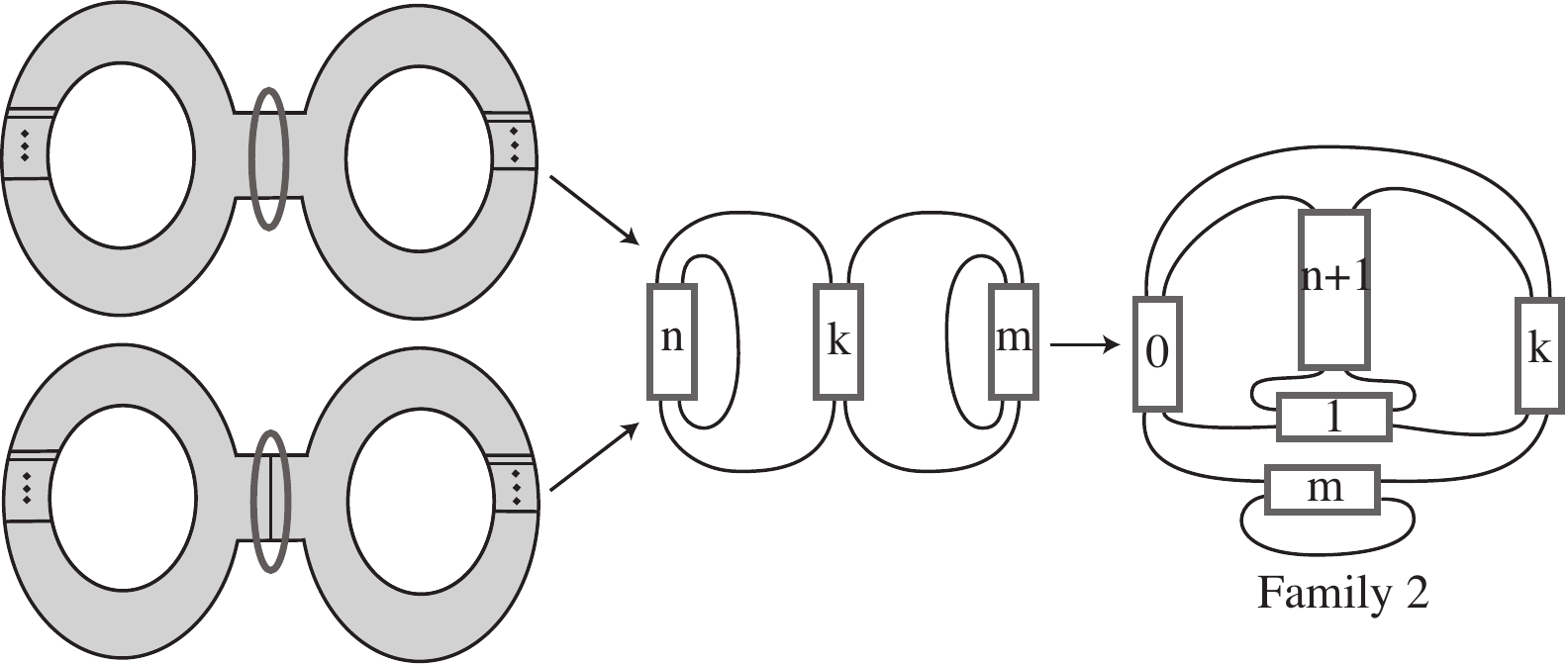}
\caption{Products when $S\cup D$ has the form of Figure~\ref{figure:CasesStripCenter}. } \label{figure:ProductD}
\end{figure}

Note that in Figure \ref{figure:ProductD}, $n$ and $m$ have the same values as they did in the substrate $T(2,n)\#T(2,m)$.  Also observe that a single crossing of $+1$ in a horizontal box is the same as a single crossing of $-1$ if the crossing were in a vertical box.  Thus the row of $n+1$ vertical crossings together with the single $+1$ horizontal crossing is equivalent to a row of $n$ vertical crossings.

For the configurations of $S\cup D$ illustrated in Figure~\ref{figure:CasesStripInOut}, the illustrations in Figure~\ref{figure:ProductE} describe the possible products.  When there is no decorating arc in $S \cap D$, each round of processive recombination mediated by a serine recombinase inserts a row of crossings which can be combined with the row of $m$ crossings in $J - B$ to get a row of $p$ crossings.  Thus the product is of the form $T(2,n)\#T(2,p)$.  We can see this in the central illustration of Figure~\ref{figure:ProductE} by letting $k= -1$, since a $-1$ in the center box becomes a $+1$ when it's joined with the vertical box with $p-1$ crossings.  When there is a decorating arc in $S \cap D$, then there are no restrictions on the value of $k$.  If recombination is mediated by a tyrosine recombinase, then $|k| \leq 2$.

 \begin{figure}[htpb]
\includegraphics[height=2.3in]{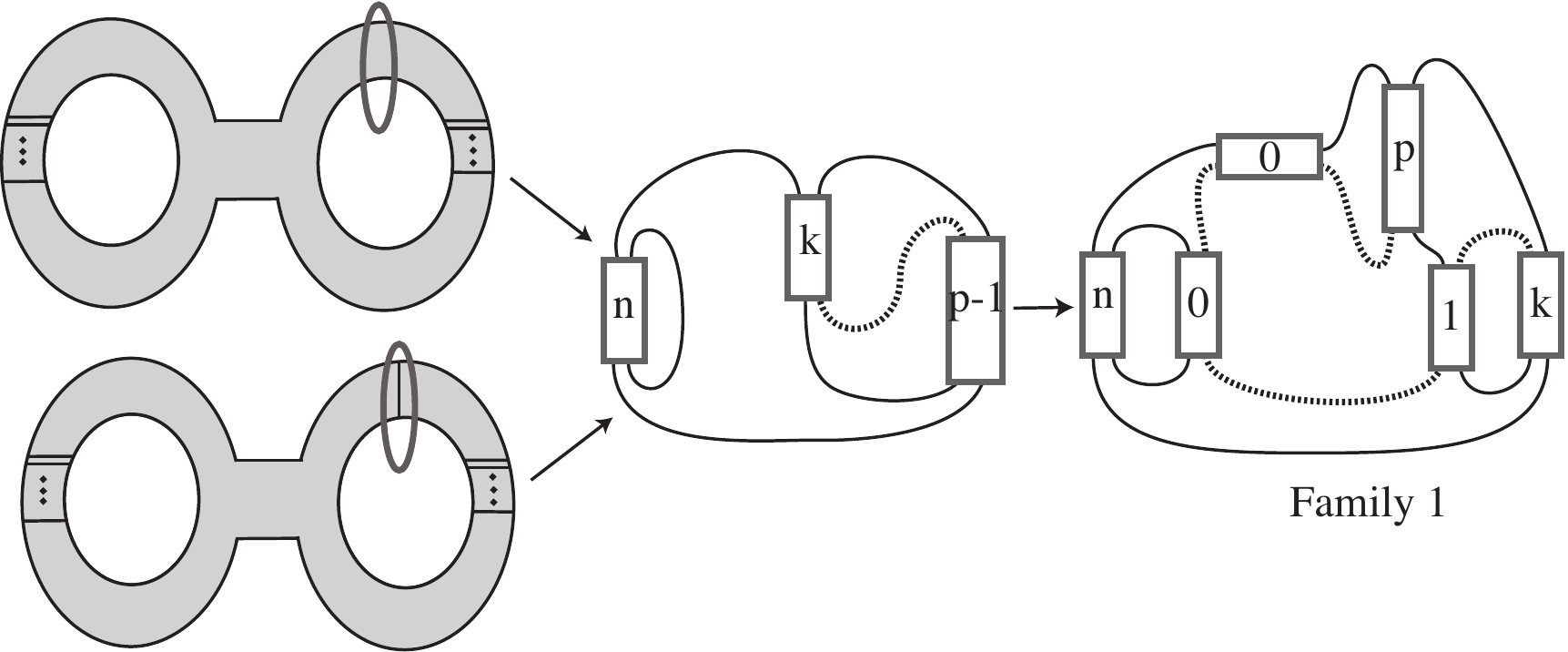}
\caption{Products when $S\cup D$ has the form of Figure~\ref{figure:CasesStripInOut}.  } \label{figure:ProductE}
\end{figure}

Note that in Figure \ref{figure:ProductE}, if $k\not=-1$, then $p-1=m$ from the substrate $T(2,n)\#T(2,m)$.  Also in the figure, we have highlighted some of the arcs with dotted lines to make it easier to see how to isotop the central illustration to obtain the illustration on the right.  This illustration is in Family 1, though it is rotated by $180^\circ$ relative to the illustration of Family 1 in Figure~\ref{Families}.  Finally, observe that in the illustration on the right the vertical box with $+1$ crossing becomes a $-1$ crossing when combined with the $p$ vertical crossings, thus giving us the box with $p-1$ vertical crossings in the center illustration.

For the configuration of $S\cup D$ in Figure~\ref{figure:CasesStripCenterHorz}, the illustration in Figure~\ref{figure:ProductF} describes the possible products.  For recombination mediated by a serine recombinase, the products are illustrated on the right with $k=\pm 1$ and no restrictions on the value of $j$.  For recombination mediated by a tyrosine recombinase, the products either have $j=0$ and $|k| \leq 2 $, or have $|j \pm 1| \leq 2$ and $k= \pm 1$ so that $|j-k|\leq 2$. 

 \begin{figure}[h]
\includegraphics[height=1.4in]{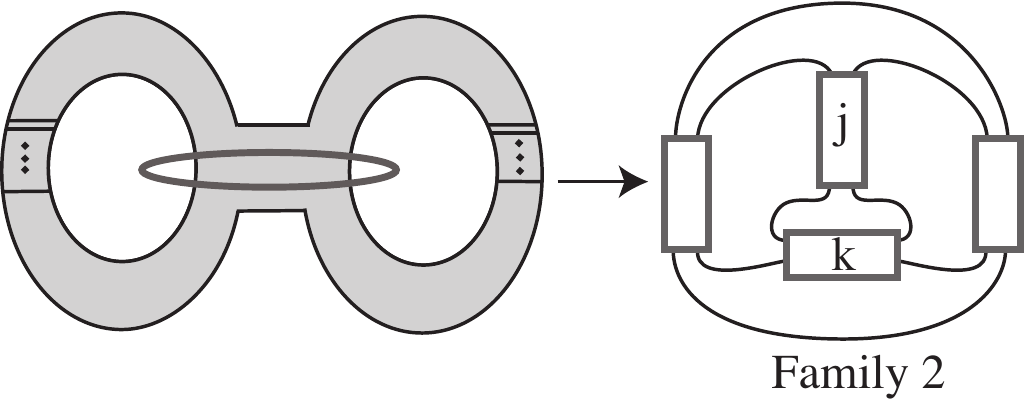}
\caption{Products when $S\cup D$ has the form of Figure~\ref{figure:CasesStripCenterHorz}. } \label{figure:ProductF}
\end{figure}

Finally, for the configuration of $S\cup D$ in Figure~\ref{figure:CasesStripCenterHorzArc}, the illustration in Figure~\ref{figure:ProductG} describes the possible products.  This is similar to the case illustrated in Figure~\ref{figure:ProductF}.  As above, for recombination mediated by a serine recombinase, the products are illustrated on the right with $k=\pm 1$  and no restrictions on the value of $j$.  For recombination mediated by a tyrosine recombinase, one box comes from the decorating arc outside of $D$ and hence is $\pm 1$. The other box is brought about by recombination mediated by a tyrosine recombinase and hence has absolute value no larger that $2$.  However, there is no restriction on which box is which.
 \end{proof}

%the products depend on the sign of the decorating arc outside of $D$.  If the decorating arc outside of $D$ represents a negative half-twist, then the product either has $j= 1$ and $|k| \leq 2 $, or has $|j | \leq 2$ and $k = 1$.  And if the decorating arc outside of $D$ represents a positive half-twist then $j = -1$ and $|k| \leq 2$ or $|j| \leq 2$ and $k = -1$.

 \begin{figure}[h]
\includegraphics[height=1.4in]{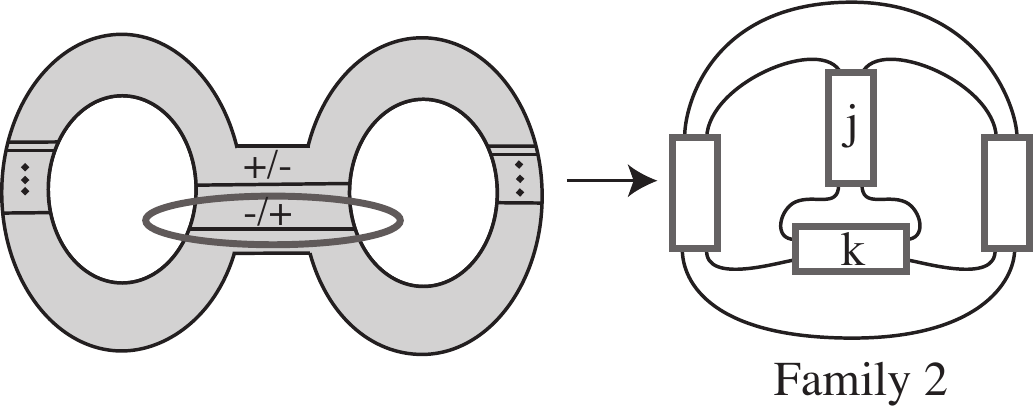}
\caption{Products when $S\cup D$ has the form of Figure~\ref{figure:CasesStripCenterHorzArc}. } \label{figure:ProductG}
\end{figure}

\medskip

\begin{rem}
For the configurations of $S\cup D$ illustrated in Figures~\ref{figure:CasesFingersNear}, \ref{figure:CasesStripU}, \ref{figure:CasesStripCenter}, and \ref{figure:CasesStripInOut}, the ball $B$ lies on one side of a sphere which separates the two connected summands of $J = T(2, m) \# T(2,n)$.  Buck and Flapan show in  \cite{BuckFlapan} that under three assumptions which are analogous to ours, if the substrate is a (possibly trivial) torus link $T(2,n)$, then the products must belong to a particular family depicted in Figure \ref{figure:BFknots}.  The products that we obtain in Figures~\ref{figure:CasesFingersNear}, \ref{figure:CasesStripU}, \ref{figure:CasesStripCenter}, and \ref{figure:CasesStripInOut} can be obtained as connected sums of a $T(2,m)$ with one of the links in the family in illustrated in Figure \ref{figure:BFknots}.  Thus our results in these cases are consistent with those of Buck and Flapan.

\end{rem}

\section*{Acknowledgements}
The authors would like to thank Dorothy Buck for suggesting this project.  The first author would also like to thank the Institute for Mathematics and its Applications for its hospitality during the Fall of 2013 when she was a long term visitor there.

\bibliographystyle{plain}
\bibliography{Biblio}

\end{document}